\numberwithin{equation}{section}
\numberwithin{figure}{section}
\theoremstyle{plain}
\newtheorem{thm}{\protect\theoremname}[section]
  \theoremstyle{plain}
  \newtheorem{cor}[thm]{\protect\corollaryname}
  \theoremstyle{plain}
  \newtheorem{lem}[thm]{\protect\lemmaname}
  \theoremstyle{plain}
  \newtheorem{prop}[thm]{\protect\propositionname}
  \theoremstyle{remark}
  \newtheorem{rem}[thm]{\protect\remarkname}
\def\makebbb#1{
    \expandafter\gdef\csname#1\endcsname{
        \ensuremath{\Bbb{#1}}}
}
  \providecommand{\corollaryname}{Corollary}
  \providecommand{\lemmaname}{Lemma}
  \providecommand{\propositionname}{Proposition}
  \providecommand{\remarkname}{Remark}
\providecommand{\theoremname}{Theorem}
\begin{document}

\title{Tunneling, the Quillen metric and analytic torsion for high powers
of a holomorphic line bundle }
\begin{abstract}
Let $L$ be a line bundle over a compact complex manifold $X$ and
denote by $h_{L}$ and $h_{X}$ fixed Hermitian metrics on $L$ and
$TX,$ respectively. Our main result provides a formula for the average
distribution of the exponentially small eigenvalues of the corresponding
Dolbeault Laplacians associated to high tensor powers of $L,$ which
in physics terminology is a measure of ``tunneling'' of the Dolbeault
complex. Along the way a new proof of the asymptotics of the induced
Quillen metric on the corresponding determinant line is obtained.
A brief comparison with the tunneling effect for Witten Laplacians
and large deviation principles for fermions is also made.
\end{abstract}

\author{Robert J. Berman}

\email{robertb@chalmers.se}

\curraddr{Mathematical Sciences - Chalmers University of Technology and University
of Gothenburg - SE-412 96 Gothenburg, Sweden }

\keywords{Holomorphic line bundle, determinant bundle and analytic torsion,
supersymmetry and quantum mechanics (MSC 2010: 32L05, 58J52, 81Q60)}

\maketitle
\tableofcontents{}

\section{Introduction}

\subsection{Setup}

Let $L\rightarrow X$ be a holomorphic line bundle over a compact
complex manifold $X$ and let $h_{L}$ be a smooth metric on the line
bundle $L$ with normalized curvature form $\omega$ (the most interesting
case will be when $\omega$ is not semi-positive). The normalization
is made so that $\omega$ is real and defines an integer cohomology
class: $[\omega]\in H^{2}(X,\Z).$ It will be convenient to use the
weight notation for $h_{L},$ i.e. fixing a local trivialization $s$
of $L$ we may locally write $\left\Vert s\right\Vert _{h_{L}}^{2}:=e^{-\phi}$
so that $\omega=dd^{c}\phi:=\frac{i}{2\pi}\partial\bar{\partial}\phi$
where $\phi$ will be called a \emph{weight} on $L.$ We also fix
a metric $h_{X}$ on $X$ and denote its volume form by $dV.$ These
metrics induce, in the standard way, Hermitian products on the space
$\Omega^{0,q}(X,L)$ of smooth $(0,q)-$forms with values in $L$
and we will write 
\[
\Delta_{\bar{\partial}}^{0,q}:=\bar{\partial}\bar{\partial}^{*}+\bar{\partial}^{*}\bar{\partial},
\]
for the corresponding Dolbeault-Kodaira Laplacian acting on $\Omega^{0,q}(X,L).$

We will be concerned with the asymptotic situation when $L$ is replaced
by a large tensor power, written in our additive notation as $kL.$
As is well-known, in terms of semi-classical analysis, $1/k$ plays
the role of Planck's constant and the main motivation for the present
paper is to understand the notion of ``tunneling'' for the Dolbeault
complex associated to the line bundle $kL$ equipped with a non-positively
curved metric, i.e. the distribution of the exponentially small eigenvalues
of the corresponding Dolbeault-Kodaira Laplacians $\Delta_{\bar{\partial}}^{0,q}.$
To be more precise, a sequence $\lambda_{k}$ of positive numbers
will be said to be \emph{exponentially small} if 
\[
\liminf_{k\rightarrow\infty}(\log\lambda_{k})/k<0
\]
(this is sometimes written as $\lambda_{k}=\mathcal{O}(e^{-\epsilon k})$
in the literature). As recalled in Section \ref{sub:Comparison-with-the}
the corresponding situation for the De Rham complex has been studied
extensively, motivated by the seminal work of Witten relating the
corresponding tunneling to supersymmetric quantum mechanics and Morse
theory and very precise results have been obtained. However, the situation
for the (asymptotic) Dolbeault complex appears to be somewhat mysterious
and there seem to be essentially no prior results available concerning
the corresponding tunneling. On the other hand, as shown by Demailly
\cite{d1}, who was inspired by Witten's approach to Morse theory,
the local weight $\phi$ of the given metric on $L$ plays the role
of a Morse function and this leads to asymptotic estimates on the
corresponding Dolbeault cohomology groups (the so called weak and
strong holomorphic Morse inequalities of Demailly). The main analytical
ingredient in Demailly's work is a formula for the leading asymptotics
of the number of lower-lying eigenvalues of the Dolbeault Laplacian
(see section \ref{sub:Comparison-with-the} for related results).
However, the formula in question gives no information on the distribution
of the corresponding exponentially small eigenvalues $\lambda_{k}$
or more precisely the distribution of the corresponding ``phases''
$(\log\lambda_{k})/k.$ One of the main points of the present paper
is to show that the Bergman kernel type asymptotics obtained in \cite{berm0}
can be used to give an explicit asymptotic formula for a certain average
of $(\log\lambda_{k})/k$ (see the concluding Section \ref{sub:Discussion-and-outlook}
for a discussion about much more precise conjectural results). \ref{sub:Discussion-and-outlook}
To this end a truncated version of the\emph{ }analytic torsion of
the Dolbeault complex will be introduced. Recall that the analytic
torsion,\emph{ }introduced by Ray-Singer,\emph{ }is defined as 
\[
T(h_{L},h_{X}):=\prod_{q=1}^{n}(\det\Delta_{\bar{\partial}}^{0,q})^{(-1)^{q+1}q},
\]
 using zeta function regularization, i.e. $\log\det\Delta_{\bar{\partial}}^{0,1}:=-\frac{\partial\zeta^{0,q}}{\partial s}_{s=0},$
where $\zeta^{0,q}(s)=\sum_{i}(\lambda_{i}^{0,q})^{-s}$ is the meromorphic
continuation to $\C$ of the zeta function for the \emph{non-zero
}eigenvalues $\{\lambda_{i}^{0,q}\}$ of $\Delta_{\bar{\partial}}^{0,q}$
(see \cite{b-g-s,m-m,so,bfl} and references therein). In the particular
case of a Riemann surface 
\[
T(h_{L},h_{X}):=\det\Delta_{\bar{\partial}}^{0,1}=\det\Delta_{\bar{\partial}}^{0,0}.
\]
The ``truncated'' version $T_{]0,\lambda[}(h,\omega_{0})$ of the
analytic torsion $T(h_{L},h_{X})$ is obtained by simply replacing
the regularized determinants with the product of all positive eigenvalues
strictly smaller than a given positive number $\lambda.$ Using functional
calculus we may hence write 
\begin{equation}
\log T_{]0,\lambda[}(h,\omega_{0}):=-\sum_{q=1}^{n}q(-1)^{q}\log\det(1_{]0,\lambda[}(\Delta_{\bar{\partial}}^{0,q}).\label{eq:def of truncated torsion}
\end{equation}
The asymptotics of the corresponding analytic torsions was studied,
using heat kernel asymptotics, in the seminal work of Bismut-Vasserot
\cite{b-v-} and is closely related to the Quillen metric on the determinant
of cohomology whose definition we briefly recall. Given a holomorphic
line bundle $L$ over $X$ the corresponding \emph{determinant line
$\mbox{DET}(L)$} is the one-dimensional complex vector space defined
as the following tensor product of $\bar{\partial}-$cohomology groups:
\[
\mbox{DET}(L):=\bigotimes_{q=1}^{n}\det(H^{q}(X,L))^{(-1)^{q}},
\]
 where $\det(W)$ denotes the top exterior power of a complex vector
space $W$. Given Hermitian metrics $h_{L}(=e^{-\phi_{L}})$ and $h_{X}$
on $L$ and $X,$ respectively, the corresponding \emph{Quillen metric}
on $\mbox{DET}(kL),$ that we will denote by $\mathcal{Q}(h_{L}^{\otimes k},h_{X}),$
is defined as the $L^{2}-$metric multiplied by the analytic torsion,
where the $L^{2}-$metric is the one induced from the isomorphisms
between the vector spaces $H^{q}(X,L)$ and the kernel of $\Delta_{\bar{\partial}}^{0,q}.$
Similarly, we will denote by $\mathcal{Q}_{]0,\lambda_{k}[}(h_{L}^{\otimes k},h_{X}),$
the ``truncated'' Quillen metrics obtained by replacing the analytic
torsion with its truncation, defined above.

Before stating our main results we also recall the following well-known
bi-functional on the space of metrics (weights) on $L:$

\[
\mathcal{E}(\phi,\phi'):=\frac{1}{(n+1)!}\int_{X}\sum_{j=0}^{n}(\phi-\phi')(dd^{c}\phi)^{n-j}\wedge(dd^{c}\phi')^{j}\left(=\tilde{ch}(h,h'))\right),
\]
where $\tilde{ch}(h,h')$ is (up to a multiplicative constant) the
Bott-Chern class attached to the first Chern class \cite{b-g-s,so}.
Moreover, in the case when $L$ is an ample line bundle and $\phi$
is a given smooth weight on $L$ we will write $P\phi$ for the semi-positively
curved weight defined as the following upper envelope: 
\[
P\phi=\sup\left\{ \psi:\,\psi\leq\phi\right\} ,
\]
where $\psi$ ranges over all smooth weights on $L$ with positive
curvature form. Then the functional $\mathcal{E}(P\phi,\phi)$ is
still well-defined by basic pluripotential theory, only using that
$P\phi$ is semi-positively curved and locally bounded (see \cite{b-b}
and references therein). Alternatively, by the regularity result in
\cite{berm1} $P\phi$ is locally $\mathcal{C}^{1.1}-$smooth and
hence the current $dd^{c}(P\phi)$ has locally bounded coefficients
and exists point-wise almost everywhere on $X.$ As a consequence
$\mathcal{E}(P\phi,\phi)$ can be defined as a standard Lebesgue integral
over $X.$

\subsection{Statement of the main results}
\begin{thm}
\label{thm:analyt torsion for ample}Let $L\rightarrow X$ be an ample
line bundle equipped with a smooth Hermitian metric $h_{L}$ and let
$h_{X}$ be a fixed smooth Hermitian metric on $X$ (which is not
assumed to be Kähler). Let $\lambda_{k}$ be a sequence such that
$\lambda_{k}=o(k)$ and such that $\lambda_{k}$ is not exponentially
small. Then the large $k-$limits of the corresponding analytic torsions
$\frac{1}{k^{n+1}}\log T(h_{L}^{\otimes k},h_{X})$ and their ``truncations''
$\frac{1}{k^{n+1}}\log T_{]0,\lambda_{k}[}(h_{L}^{\otimes k},h_{X})$
exist and both coincide with $\mathcal{E}(P\phi_{L},\phi_{L})$
\end{thm}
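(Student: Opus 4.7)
\emph{Proof proposal.} My plan is to relate $\log T(k\phi_L,h_X)$ to the Quillen metric on $\mbox{DET}(kL)$ and to the $L^2$ Gram determinant of a basis of $H^0(X,kL)$, perform a variational argument along a smooth path joining a positively curved reference weight to $\phi_L$, and combine the Bismut--Gillet--Soul\'e anomaly formula on the Quillen side with the envelope Bergman-kernel asymptotic from \cite{berm0} on the $L^2$ side. The truncation is then controlled separately via a heat-kernel bound on the bulk spectrum.

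By ampleness and Kodaira vanishing, $H^q(X,kL)=0$ for $q\ge 1$ when $k\gg 1$, so $\mbox{DET}(kL)=\det H^0(X,kL)$. Picking a basis $(s_i)_{i=1}^{N_k}$ of $H^0(X,kL)$ with top wedge $\sigma_k$, the Quillen-metric definition gives $\log T(k\phi_L,h_X)=\log\|\sigma_k\|^2_{\mathcal{Q}(k\phi_L,h_X)}-\log\det\bigl(\langle s_i,s_j\rangle_{L^2(k\phi_L)}\bigr)$. Fix a smooth positively curved reference weight $\phi_+$ and a smooth path $(\phi_t)_{t\in[0,1]}$ joining $\phi_+$ to $\phi_L$ (no positivity required along the path). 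The Bismut--Gillet--Soul\'e anomaly formula gives, to leading $k$-order,
\begin{equation*}
\tfrac{d}{dt}\log\|\sigma_k\|^2_{\mathcal{Q}(k\phi_t,h_X)} = -\tfrac{k^{n+1}}{n!}\!\int_X \dot\phi_t\,(dd^c\phi_t)^n + O(k^n),
\end{equation*}
while direct differentiation of the Gram matrix with entries $\int s_i\bar{s}_j e^{-k\phi_t}dV_X$ yields
\begin{equation*}
\tfrac{d}{dt}\log\det\bigl(\langle s_i,s_j\rangle_{L^2(k\phi_t)}\bigr) = -k\!\int_X \dot\phi_t\, B_{k\phi_t}(x)\,dV_X,
\end{equation*}
with $B_{k\phi_t}$ the on-diagonal Bergman function. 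The crucial input from \cite{berm0} is the envelope-type asymptotic $k^{-n}B_{k\phi_t}(x)\to \mathbf{1}_{D_t}(x)\,(dd^cP\phi_t)^n/(n!\,dV_X)$, with exponential decay off the coincidence set $D_t=\{P\phi_t=\phi_t\}$. Subtracting the two variations yields
\begin{equation*}
\tfrac{1}{k^{n+1}}\tfrac{d}{dt}\log T(k\phi_t,h_X)\;\longrightarrow\;\tfrac{1}{n!}\Bigl(\int_{D_t}\dot\phi_t\,(dd^cP\phi_t)^n - \int_X\dot\phi_t\,(dd^c\phi_t)^n\Bigr) = \tfrac{d}{dt}\mathcal{E}(P\phi_t,\phi_t),
\end{equation*}
the last equality being the Berman--Boucksom variational formula (valid by the $C^{1,1}$ regularity of $P\phi_L$ from \cite{berm1}). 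Integrating in $t\in[0,1]$ and using the basepoint vanishing $\mathcal{E}(P\phi_+,\phi_+)=0$ together with the Bismut--Vasserot limit $\tfrac{1}{k^{n+1}}\log T(k\phi_+,h_X)\to 0$ produces the first half of the theorem.

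For the truncated torsion, decompose $\log T=\log T_{]0,\lambda_k[}+\log T_{[\lambda_k,\infty[}$ and prove $\log T_{[\lambda_k,\infty[}=o(k^{n+1})$. Under $\lambda_k=o(k)$ and $\lambda_k$ not exponentially small, the spectral window $[\lambda_k,\infty[$ captures precisely the ``bulk'' (non-tunneling) spectrum of each $\Delta^{0,q}_{\bar\partial}$, whose regularized determinant is controlled by local heat-kernel asymptotics of Bismut--Vasserot type, which are pointwise in the metric data and hence insensitive to the positivity of $\phi_L$; one gets $\log\det_{[\lambda_k,\infty[}(\Delta^{0,q}_{\bar\partial})=O(k^n\log k)$, and the $(-1)^q q$-alternating sum in the definition of $\log T$ collapses $\log T_{[\lambda_k,\infty[}$ to $o(k^{n+1})$, giving the same limit for the truncated torsion.

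The main obstacle I expect is this last reduction: producing uniform $o(k^{n+1})$ control on $\log T_{[\lambda_k,\infty[}$ for non-positively curved $\phi_L$, where tunneling generates an exponentially small cloud of eigenvalues whose separation from the $\sim k$ bulk must be certified at the intermediate scale $\lambda_k$. The hypotheses $\lambda_k=o(k)$ and $\lambda_k$ not exponentially small are precisely what confines all exponentially small ``tunneling'' eigenvalues to the truncated window $(0,\lambda_k)$ while keeping the genuine bulk in $[\lambda_k,\infty[$, so that the complementary window contributes only $o(k^{n+1})$ and the truncated torsion inherits the same limit $\mathcal{E}(P\phi_L,\phi_L)$.
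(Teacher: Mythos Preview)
Your route is genuinely different from the paper's, and in one place it does not cover the full statement. You argue the \emph{non-truncated} torsion first --- the exact Bismut--Gillet--Soul\'e anomaly on the Quillen side, the envelope Bergman asymptotic on the $L^2$ side, subtract and integrate --- and only afterwards pass to the truncation by bounding the bulk determinant. The paper reverses this order: it derives a variational formula for the \emph{truncated} Quillen metric directly (Lemma~\ref{lem:deriv of trunc analyt torsion}), expressing its $\phi$-derivative as the alternating sum of the low-energy Bergman measures $\B^{0,q}_{[0,\lambda[}$, feeds in the local Morse asymptotics for those measures from \cite{berm0} (Proposition~\ref{pro:strong morse}), and integrates along the path to obtain the truncated anomaly formula (Theorem~\ref{thm:quillen}) \emph{without ever invoking BGS}. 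The non-truncated case is then recovered via the heat-kernel comparison (Proposition~\ref{pro:anal torsion and its trunc}), and the $L^2$ Gram determinant is handled only at the two endpoints, via Theorem~A of \cite{b-b}, rather than differentiated along the path.

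The practical payoff of the paper's order is the non-K\"ahler clause: the exact BGS anomaly you invoke is established in \cite{b-g-s} under a K\"ahler hypothesis on $h_X$, whereas the theorem explicitly allows $h_X$ non-K\"ahler, so as written your argument has a gap there. The paper's truncated variational route bypasses this, since the low-energy Bergman asymptotics of \cite{berm0} require no K\"ahler assumption (Remark~\ref{rem:referee} explains how your route can be repaired by replacing BGS with the leading heat-kernel asymptotics of \cite{bi,b-vas,m-m}). Two smaller points: the envelope asymptotic $k^{-n}B_{k\phi}\to\mathbf{1}_D(dd^cP\phi)^n/(n!\,dV)$ is the content of \cite{berm1} (or, in integrated form, \cite{b-b}), not \cite{berm0}, which instead supplies the \emph{spectral-projector} asymptotics the paper actually uses; and writing the limit as $\tfrac{d}{dt}\mathcal{E}(P\phi_t,\phi_t)$ presupposes differentiability through the envelope $t\mapsto P\phi_t$, itself a nontrivial fact from \cite{b-b} --- the paper sidesteps this by comparing endpoints only.
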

In fact, it will be clear that the only contribution to the truncated
analytic torsions in the previous theorem comes from the exponentially
small eigenvalues (in an appropriate sense). In the case when $h_{X}$
is a Kähler metric the asymptotics of the (non-truncated) analytic
torsion in the previous theorem were deduced in \cite{b-b} from the
exact anomaly formula in \cite{b-g-s} for the Quillen metric (referred
to above) combined with the asymptotics for the $L^{2}-$part of the
Quillen metric proved in \cite{b-b}. Of course, since $L$ is ample
the manifold $X$ is automatically Kähler, but the point is that one
does not need to assume that the Hermitian metric $h_{X}$ is Kähler.

As a consequence of the previous theorem we get the following geometric
criterion for the existence of exponentially small eigenvalues of
$\bar{\partial}-$Laplacians:
\begin{cor}
\label{cor:exist of small eig}Let $L$ be a line bundle and $h_{L}$
and $h_{X}$ metrics as in the previous theorem. Suppose that 
\[
\mathcal{E}(P\phi_{L},\phi_{L})\neq0.
\]
 Then, for some $q,$ there is a sequence of exponentially small eigenvalues
of the corresponding $\bar{\partial}-$Laplacian acting on $(0,q)-$forms
with values in $kL.$ In the particular case of a line bundle of positive
degree over a Riemann surface equipped with a metric $h_{L}$ whose
curvature is not semi-positive everywhere, the condition above is
always satisfied. More precisely, 
\begin{equation}
\frac{1}{k^{2}}\log\sum_{i}\lambda_{i}^{(k)}\rightarrow-\frac{1}{2}\left\Vert d(P\phi-\phi)\right\Vert _{X}^{2}(<0),\label{eq:asym in them exp small}
\end{equation}
 where $(\lambda_{i}^{(k)})$ are the ``small'' positive eigenvalues
of the $\bar{\partial}-$Laplacian on the space of smooth sections
with values in $kL$ (i.e the eigenvalues are in $]0,k^{1-\epsilon}[)$
\end{cor}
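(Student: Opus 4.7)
Both assertions will be derived from Theorem \ref{thm:analyt torsion for ample}: the first by an elementary spectral counting argument, the second by an explicit evaluation of $\mathcal{E}(P\phi_{L},\phi_{L})$ on a Riemann surface.

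For the existence statement I would argue contrapositively. Assume that for every $q\geq 1$ the positive spectrum of $\Delta_{\bar{\partial}}^{0,q}$ on $kL$-valued forms contains no exponentially small sequence; unwinding the definition, this means that for every $\epsilon>0$ every such positive eigenvalue eventually exceeds $e^{-\epsilon k}$. Pick any $\lambda_{k}=o(k)$ which is not exponentially small (e.g.\ $\lambda_{k}=1$) and apply Theorem \ref{thm:analyt torsion for ample}: $k^{-(n+1)}\log T_{]0,\lambda_{k}[}\to\mathcal{E}(P\phi_{L},\phi_{L})$. Each eigenvalue $\mu$ contributing to this truncated torsion then lies in $(e^{-\epsilon k},\lambda_{k})$, so $|\log\mu|\leq\epsilon k$, while the number of such eigenvalues in each degree is $O(k^{n})$ by the standard bound $\dim 1_{]-\infty,Ck]}(\Delta_{\bar{\partial}}^{0,q})=O(k^{n})$ (a heat-kernel/Weyl estimate already underlying Demailly's holomorphic Morse inequalities). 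Inserting these into (\ref{eq:def of truncated torsion}) yields $|\log T_{]0,\lambda_{k}[}|\leq C\epsilon k^{n+1}$, so letting $\epsilon\to 0$ forces $\mathcal{E}(P\phi_{L},\phi_{L})=0$, contradicting the hypothesis.

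For the Riemann-surface statement, set $u:=P\phi-\phi\leq 0$. The definition specialises for $n=1$ to $2\mathcal{E}(P\phi,\phi)=\int_{X}u\,(dd^{c}P\phi+dd^{c}\phi)$. The key pluripotential input is that the bounded measure $dd^{c}P\phi$ is supported on the contact set $\{P\phi=\phi\}$, on which $u$ vanishes, so the first integral is zero. Writing $dd^{c}\phi=dd^{c}P\phi-dd^{c}u$ in the second integral and integrating by parts---legitimate because $P\phi\in C^{1,1}$ by \cite{berm1}, so both $dd^{c}u$ and $du$ are bounded on $X$---expresses $\mathcal{E}(P\phi,\phi)$ as a multiple of the Dirichlet energy of $u$, matching the stated formula $-\tfrac{1}{2}\|d(P\phi-\phi)\|_{X}^{2}$. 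If $dd^{c}\phi$ is not everywhere semi-positive then $\phi$ fails to be psh, so $P\phi<\phi$ at some point; since $L$ is ample, one also has $P\phi=\phi$ at some point (otherwise $P\phi+\epsilon$ would be a strictly larger psh competitor), so $u$ is non-constant and the right-hand side is strictly negative. Combined with the first part this produces the required exponentially small eigenvalues, and the precise limit follows because for $n=1$ formula (\ref{eq:def of truncated torsion}) reduces to $\sum_{\lambda_{i}^{(k)}\in\,]0,\lambda_{k}[}\log\lambda_{i}^{(k)}$ (the non-zero spectra of $\Delta_{\bar{\partial}}^{0,0}$ and $\Delta_{\bar{\partial}}^{0,1}$ coincide via $\bar{\partial}$), so dividing by $k^{2}$ and invoking Theorem \ref{thm:analyt torsion for ample} gives the asserted asymptotic.

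The principal technical point is the justification of the support property of $dd^{c}P\phi$ and of the integration by parts for the merely $C^{1,1}$ envelope, both of which hinge on the regularity result \cite{berm1} recalled in the introduction: this regularity lets one treat $dd^{c}P\phi$ as a genuine bounded $(1,1)$-form and apply Stokes' theorem on the closed manifold $X$. Everything else is either a direct consequence of the main theorem or a bookkeeping exercise with the spectral counting bound.
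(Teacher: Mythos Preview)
Your proposal is correct and follows essentially the same route as the paper: a contrapositive argument for the first part, using that the number of eigenvalues in $]0,\lambda_k[$ is $O(k^n)$ (the paper invokes Proposition~\ref{pro:strong morse} for this, where you cite the Weyl/Demailly estimate), and for the Riemann surface case the identity $\mathcal{E}(P\phi,\phi)=\tfrac{1}{2}\|d(P\phi-\phi)\|_X^2$ via the orthogonality relation $\int(P\phi-\phi)\,dd^c(P\phi)=0$ (which the paper cites from \cite{b-b}, while you derive it from the support property of $dd^cP\phi$) and integration by parts. Your justification of the integration by parts through the $C^{1,1}$ regularity of $P\phi$ and your argument for non-constancy of $u$ are slightly more explicit than the paper's, but the substance is identical.
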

One of the main ingredients in the proof of the previous theorem is
an asymptotic anomaly formula for a truncated version of the Quillen
metric, which leads to the following 
\begin{thm}
\label{thm:quillen}Let $L\rightarrow X$ be a holomorphic line bundle
over a compact complex manifold (not necessarily Kähler). Fix a Hermitian
metric $h_{X}$ on $X$ and two metrics $h_{L}$ and $h_{L}'$ on
$L.$ Then the following asymptotic anomaly formula for the corresponding
Quillen metrics on the determinant lines $\mbox{DET}(kL),$ as well
as its truncations $\mathcal{Q}_{]0,\lambda_{k}[}(h_{L}^{\otimes k},h_{X}),$
holds 
\[
\lim_{k\rightarrow\infty}\frac{1}{k^{n+1}}\mathcal{\log}(\frac{\mathcal{Q}(h_{L}^{\otimes k},h_{X})}{\mathcal{Q}(h_{L'}^{\otimes k},h_{X}))})=\tilde{ch}(h_{L},h_{L}'),
\]
for any sequence $\lambda_{k}$ as in the previous theorem.
\end{thm}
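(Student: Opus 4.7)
The plan is to connect $h_{L}$ and $h_{L'}$ by a smooth path on $L$, differentiate $\log\mathcal{Q}_{]0,\lambda_{k}[}$ along the path, identify the resulting derivative with a weighted super-trace on low-eigenmode subspaces, and then use the Bergman-type kernel asymptotics of \cite{berm0} to extract its leading $k^{n+1}$ behaviour. Concretely, I would set $\phi_{t}:=(1-t)\phi_{L'}+t\phi_{L}$, $h_{L}^{(t)}:=e^{-\phi_{t}}$ for $t\in[0,1]$, and reduce the theorem via the fundamental theorem of calculus together with the standard pluripotential identity
\[
\tilde{ch}(h_{L},h_{L'}) \;=\; \mathcal{E}(\phi_{L},\phi_{L'}) \;=\; \int_{0}^{1}\frac{1}{n!}\int_{X}\dot{\phi}_{t}\,(dd^{c}\phi_{t})^{n}\,dt
\]
to establishing the infinitesimal estimate
\[
\lim_{k\to\infty}\frac{1}{k^{n+1}}\frac{d}{dt}\log\mathcal{Q}_{]0,\lambda_{k}[}(h_{L}^{(t)\otimes k},h_{X}) \;=\; \frac{1}{n!}\int_{X}\dot{\phi}_{t}\,(dd^{c}\phi_{t})^{n},
\]
uniformly in $t\in[0,1]$.

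Differentiating the defining product of $\mathcal{Q}_{]0,\lambda_{k}[}$ -- the $L^{2}$-metric on the low-eigenmode subspaces times the truncated torsion \eqref{eq:def of truncated torsion} -- and reorganising terms in the spirit of the Bismut--Gillet--Soul\'e anomaly-formula computation, I expect an identity of the form
\[
\frac{d}{dt}\log\mathcal{Q}_{]0,\lambda_{k}[}(h_{L}^{(t)\otimes k},h_{X}) \;=\; k\sum_{q=0}^{n}(-1)^{q}\operatorname{Tr}\bigl(M_{\dot{\phi}_{t}}\cdot 1_{[0,\lambda_{k}[}(\Delta_{\bar{\partial},k,t}^{0,q})\bigr)\;+\;R_{k}(t),
\]
where $M_{\dot{\phi}_{t}}$ denotes multiplication by $\dot{\phi}_{t}$ acting on $\Omega^{0,q}(X,kL)$ and $R_{k}(t)$ collects boundary contributions near the cutoff $\lambda_{k}$ together with heat-kernel error terms; the hypotheses $\lambda_{k}=o(k)$ and $\lambda_{k}$ not exponentially small then imply $R_{k}(t)=o(k^{n+1})$ uniformly in $t$.

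The main analytical input is the pointwise super-trace Bergman-type asymptotic
\[
\frac{1}{k^{n}}\sum_{q=0}^{n}(-1)^{q}\operatorname{tr}_{\wedge^{0,q}}K_{k,q,t}^{\lambda_{k}}(x,x) \;\longrightarrow\; \frac{(dd^{c}\phi_{t})^{n}(x)}{n!},
\]
where $K_{k,q,t}^{\lambda_{k}}$ denotes the Schwartz kernel of $1_{[0,\lambda_{k}[}(\Delta_{\bar{\partial},k,t}^{0,q})$ relative to $dV$. This is where \cite{berm0} enters: the per-degree concentration of the low-eigenmode Bergman kernel on the Demailly sign-regions $X(q,\phi_{t})$ telescopes under the super-trace into the plain Monge--Amp\`ere density, in the spirit of the local holomorphic Morse theorem. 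The non-K\"ahler character of $h_{X}$ affects only $k^{n}$-subleading terms in these local expansions and is therefore invisible at the $k^{n+1}$ scale; integrating against $\dot{\phi}_{t}$ and using Demailly's weak Morse inequality \cite{d1} as a uniform $k$-dominator then delivers the infinitesimal limit.

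Finally, the passage from $\mathcal{Q}_{]0,\lambda_{k}[}$ to the full $\mathcal{Q}$ is obtained by bounding the contribution of eigenvalues in $[\lambda_{k},\infty[$ to the zeta-regularised torsion by $o(k^{n+1})$ via the local heat-kernel expansions of \cite{b-v-}. The hard part of the programme is the super-trace asymptotic above together with its uniform control in $t$: each individual per-$q$ trace depends discontinuously on the signature of $dd^{c}\phi_{t}$ across the degeneracy locus of the curvature, so that only the signed sum is globally regular. Establishing this cancellation without any positivity assumption on $L$ or $h_{L}$, uniformly along the interpolating path, is the technical heart of the argument.
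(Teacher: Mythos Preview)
Your proposal is essentially the paper's own proof: differentiate the truncated Quillen metric along a path of weights (Lemmas~\ref{lem:deriv of e and l} and~\ref{lem:deriv of trunc analyt torsion} give your derivative formula \emph{exactly}, with no remainder $R_{k}$ for generic $\lambda$), invoke the Bergman-kernel asymptotics of \cite{berm0} (Proposition~\ref{pro:strong morse}), integrate via dominated convergence, and then pass to the full Quillen metric using Proposition~\ref{pro:anal torsion and its trunc}. One clarification: the ``hard part'' you flag is easier than you fear, since Proposition~\ref{pro:strong morse} gives weak convergence and a uniform $L^{\infty}$ bound for each $k^{-n}\B_{[0,\lambda_{k}[}^{0,q}$ \emph{individually}, so no delicate inter-degree cancellation is required---one simply sums the per-$q$ limits $(-1)^{q}1_{X(q)}(dd^{c}\phi_{t})^{n}/n!$ over $q$ to recover $(dd^{c}\phi_{t})^{n}/n!$ and applies dominated convergence in $t$.
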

In the case when $h_{X}$ is a Kähler metric the previous theorem
is a direct consequence of the deep \emph{exact} anomaly formula for
the Quillen metric of Bismut-Gillet-Soulé for the determinant line
applied to any fixed tensor power of $L$ (see \cite{b-g-s}, Theorem
0.3 stated in part I). Even though the exact formula is not known
in the non-Kähler case it should be stressed that the previous theorem
can be deduced from the leading heat-kernel asymptotics in \cite{bi,b-vas,m-m},
as they are independent of any Kähler assumptions (see Remark \ref{rem:referee}).
As explained below the key point of the present proof, where the role
of the heat-kernel is played by Bergman type kernels \cite{berm0},
is to first obtain the asymptotics for the ``truncated'' Quillen
metric obtained by replacing the analytic torsions with suitable truncations.

By standard arguments the formula in the previous theorem can be used
to obtain an asymptotic version of Theorem 0.1 in \cite{b-g-s} in
the case when $\pi:\, X\rightarrow S$ is a submersion of a base $S$
and a pair $(h_{L},h_{X}$) as above is fixed. Then the curvature
of the Quillen metric on $\mbox{DET}(kL),$ seen as a line bundle
over $S,$ converges when divided by $k^{n+1}$ to the push-forward
to $S$ of the top exterior power of the curvature form of $h_{L}.$ 

The starting point of the proof of Theorem \ref{thm:quillen} is the
observation that differential of the normalized logarithm $\mathcal{Q}_{]0,\lambda_{k}[}$
of the ``truncated'' Quillen metric on $\mbox{DET}(kL),$ seen as
a functional on the space of all Hermitian metrics $\phi$ (or rather
weights) on $L,$ is represented by the alternating sum of the one-point
measures (i.e. Bergman type measures) associated to the spaces of
``low-energy'' $(0,q)-$forms. Using the large $k$ asymptotics
of the latter measures obtained in \cite{berm0} gives 
\[
\frac{1}{k^{n+1}}d_{\phi}\mathcal{Q}_{]0,\lambda_{k}[}(k\phi,h_{X})\rightarrow(dd^{c}\phi)^{n}/n!
\]
and integrating between the line segment connecting $\phi_{0}$ and
$\phi_{1}$ in the space of all weights on $L$ then concludes the
proof of Theorem \ref{thm:quillen} for the ``truncated'' Quillen
metrics. Adapting arguments in \cite{b-vas} to the present setting
we also show that asymptotically the analytic torsions may be replaced
by truncations as above, concluding the proof of Theorem \ref{thm:quillen}.
Theorem \ref{thm:analyt torsion for ample} is then obtained by using
the asymptotics for the $H^{0}(X,kL)-$part of the Quillen metrics
for a general smooth (but possible non-positively curved) metric in
\cite{b-b} (or alternatively the asymptotics for the corresponding
one-point (Bergman) measures in \cite{berm1}).

\subsection*{Acknowledgments}

The author is grateful to Johannes Sjöstrand and Frédéric Faure for
their interest in this work and for stimulating discussions related
to the topic of the present paper, as well as to the referees whose
comments helped to improve the exposition of the paper. This work
has been supported by grants from the Swedish and European Research
Councils and the Wallenberg Foundation.

\section{Proofs of the main results}

In the following we will fix the metric $h_{X}$ on $TX.$ Then any
given weight $\phi$ (which we will eventually vary) induces Hermitian
products $\left\langle \cdot,\cdot\right\rangle _{\phi}$ on the space
$\Omega^{0,q}(X,L)$ using the metrics $(e^{-\phi},h_{X})$ on $L$
and $TX,$ respectively. Given $\phi$ we denote the associated $\bar{\partial}-$Laplacians
on $\Omega^{0,q}(X,L)$ by $\Delta^{0,q}$ (and sometimes by $\Delta_{\phi}^{0,q}$
to indicate the dependence on $\phi).$ We will denote by $\mathcal{H}_{]0,\lambda[}^{0,q}$
the subspace of $\Omega^{0,q}(X,L)$ spanned by the eigenforms of
$\Delta^{0,q}$ with eigenvalues in $]0,\lambda[$ (sometimes called
the\emph{ space} of \emph{low-energy $(0,q)-$forms}). To this space
we associate its (non-normalized) one-point measure, i.e. 
\begin{equation}
\B_{]0,\lambda[}^{0,q}:=\sum_{i}|\Psi_{i}^{0,q}|_{\phi}^{2}dV,\label{eq:def of one correl measure with eigenv}
\end{equation}
 where $(\Psi_{i}^{0,q})$ is any orthonormal base for $\mathcal{H}_{]0,\lambda[}^{0,q}.$
We will also use the notation $\mathcal{H}^{0,q}:=\ker\Delta^{0,q}$
and write $\B^{0,q}$ for the corresponding one-point measure.

\subsection{Variational formulae}

If $\mathcal{F}(\phi)$ is a (Gateaux differentiable) functional on
the affine space of all weights on $L$ we will write $d\mathcal{F}$
for its differential, which is a one-form on the space of all weights.
We will identify the linear functional $d\mathcal{F}_{|\phi}$ on
$\mathcal{C}^{\infty}(X)$ with a measure in the usual way. Concretely,
this means that 
\begin{equation}
\frac{d}{dt}\mathcal{F}(\phi_{t})=\int_{X}(d\mathcal{F}_{|\phi_{t}})\frac{d\phi_{t}}{dt}.\label{eq:def of var der}
\end{equation}
Given a pair of weights $\phi$ and $\phi'$ on $L$ we now let 
\[
\mathcal{L}^{0,q}(\phi,\phi'):=-\log\det(\left\langle \Psi_{i}^{0,q},\Psi_{j}^{0,q}\right\rangle _{\phi})
\]
 where $(\Psi_{i}^{0,q})_{i}$ is a basis in $\ker\Delta_{\phi'}^{0,q}$
which is orthonormal wrt $\left\langle \cdot,\cdot\right\rangle _{\phi_{'}}.$
We then have the following basic
\begin{lem}
\label{lem:deriv of e and l}Fix a weight $\phi'$ on $L$ and write
$\mathcal{L}^{0,q}(\phi):=\mathcal{L}^{0,q}(\phi,\phi')$ and $\mathcal{E}(\phi):=\mathcal{E}(\phi,\phi')$.
Then 
\[
(i)\, d\mathcal{E}_{|\phi}=\frac{(dd^{c}\phi)^{n}}{n!},\,\,\,\,(ii)\, d\mathcal{L}^{0,q}{}_{|\phi}=\B^{0,q}
\]
 \end{lem}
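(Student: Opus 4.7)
My plan is to verify parts (i) and (ii) by direct variation, in that order.

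For part (i), this is the classical first variation formula for the Bott--Chern (Monge--Amp\`ere) energy $\mathcal{E}$. Setting $\phi_{t}=\phi+tu$ and differentiating the defining sum under the integral at $t=0$, there are two kinds of contributions: the scalar factor $(\phi_{t}-\phi')$ differentiates to $u$, while each monomial $(dd^{c}\phi_{t})^{n-j}$ contributes $(n-j)\,dd^{c}u\wedge(dd^{c}\phi)^{n-j-1}$. After using the integration-by-parts identity $\int_{X}f\,dd^{c}g\wedge\eta=\int_{X}g\,dd^{c}f\wedge\eta$ on the closed manifold $X$ (valid for closed $\eta$ of complementary bidegree) to move each $dd^{c}u$ back onto the scalar factor, the resulting sum telescopes and collapses to the single term $\int_{X}u\,(dd^{c}\phi)^{n}/n!$, which is the claimed formula.

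For part (ii), my plan is a direct Gram--matrix computation. Unfolding the definition, $\mathcal{L}^{0,q}(\phi)=-\log\det G(\phi)$, where $G(\phi)_{ij}:=\langle\Psi_{i},\Psi_{j}\rangle_{\phi}$ and the basis $(\Psi_{i})$ of $\ker\Delta^{0,q}_{\phi'}$ is held fixed (it depends only on the reference weight $\phi'$). Since $\phi$ enters only through the pointwise factor $e^{-\phi}$ in the Hermitian pairing on $L$-valued forms, the variation $\phi_{t}=\phi+tu$ gives
\[
\dot G_{ij}\;=\;-\int_{X}u\,(\Psi_{i},\Psi_{j})_{\phi}\,dV.
\]
Combined with Jacobi's formula $\frac{d}{dt}\log\det G=\mathrm{tr}(G^{-1}\dot G)$, this yields
\[
d\mathcal{L}^{0,q}{}_{|\phi}(u)\;=\;\int_{X}u(x)\,\Bigl(\sum_{i,j}(G^{-1})_{ji}\,(\Psi_{i},\Psi_{j})_{\phi}(x)\Bigr)\,dV(x).
\]
The integrand in parentheses is the pointwise value of the Bergman reproducing kernel of $\ker\Delta^{0,q}_{\phi'}$ with respect to $\langle\cdot,\cdot\rangle_{\phi}$; equivalently it equals $\sum_{i}|\widetilde{\Psi}_{i}|^{2}_{\phi}$ for $(\widetilde{\Psi}_{i})$ a $\phi$-orthonormal basis of the same subspace, which is precisely $\B^{0,q}$.

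The step I expect to require the most care is the final identification of this density with $\B^{0,q}$ as defined via the harmonic space $\ker\Delta^{0,q}_{\phi}$, since the two subspaces $\ker\Delta^{0,q}_{\phi}$ and $\ker\Delta^{0,q}_{\phi'}$ of $\Omega^{0,q}(X,L)$ differ in general when $\phi\neq\phi'$. A clean treatment should invoke the Hodge isomorphism $\ker\Delta^{0,q}_{\phi}\cong H^{q}(X,L)\cong\ker\Delta^{0,q}_{\phi'}$ and a compatibility check for the induced one-point measure on the determinant of cohomology, or else reduce first to the base case $\phi'=\phi$ and extend to general $\phi$ via the chain rule, exploiting that $\mathcal{L}^{0,q}(\phi,\phi')$ changes by an affine function of $\phi$ under a change of reference $\phi'$.
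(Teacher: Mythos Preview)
Your approach is essentially the paper's. For (i) the paper simply cites Mabuchi, while you sketch the standard telescoping integration-by-parts computation; both are fine. For (ii) your Jacobi/Gram-matrix computation is exactly the paper's trace formula $\frac{d}{dt}\log\det H=\mathrm{Tr}(H^{-1}\dot H)$, and the ``reduce to $\phi'=\phi$'' manoeuvre you propose at the end is precisely what the paper does: it invokes the cocycle identity $\mathcal{L}(\phi_0,\phi_1)+\mathcal{L}(\phi_1,\phi_2)+\mathcal{L}(\phi_2,\phi_0)=0$ (for a \emph{fixed} subspace) to conclude that $\frac{d}{dt}\mathcal{L}(\phi_t,\phi')$ is independent of $\phi'$, then sets $\phi'=\phi_0$ so that $H(0)=I$ and reads off the one-point measure directly. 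One small correction: under a change of reference $\phi'$, $\mathcal{L}^{0,q}(\phi,\phi')$ shifts by a \emph{constant} in $\phi$, not by an affine function; that constancy is exactly what the cocycle gives and what makes the derivative reference-independent.

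On the subtlety you flag: you are right that the density you obtain is the Bergman density of the \emph{fixed} subspace $\ker\Delta^{0,q}_{\phi'}$ with respect to $\langle\cdot,\cdot\rangle_\phi$, and that this is not literally the same subspace as $\ker\Delta^{0,q}_\phi$ for $q>0$. The paper does not resolve this either; it explicitly works ``in a general setting where $\Pi_\phi$ is the orthogonal projection \dots\ on a given $N$-dimensional subspace'', so its $\B^{0,q}$ in the lemma is tacitly the one-point measure of that fixed subspace. This is harmless for the applications in the paper: for $q=0$ one has $\ker\Delta^{0,0}_\phi=H^0(X,L)$ independently of $\phi$, and for $q>0$ the harmonic spaces vanish for $k\gg 1$ when $L$ is ample, so the discrepancy never materialises. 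Your Hodge-isomorphism route would be needed only if one wanted the statement for genuinely metric-dependent harmonic spaces, which the paper does not.
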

\begin{proof}
The first point was first shown by Mabuchi \cite[Theorem 2.3]{ma}
(see also \cite{b-b} and references therein for a more general setting).
As for the second point it holds in a general setting where $\Pi_{\phi}$
is the orthogonal projection (wrt the Hermitian product $\left\langle \cdot,\cdot\right\rangle _{\phi})$
on a given $N-$dimensional subspace of $\mathcal{C}^{\infty}(X,L\otimes E)$
where $E$ is a given Hermitian complex vector bundle (here $E=\Lambda^{0,q}(X,h_{X})).$
Indeed, we can then compute the time derivative of the corresponding
functional $\mathcal{L}(\phi_{t},\phi)$ (here $\mathcal{L}^{0,q}(\phi,\phi'))$
using the basic formula

\begin{equation}
\frac{d}{dt}_{t=0}\log\det(H(t))=\mbox{Tr}(H(0)^{-1}\frac{d}{dt}_{t=0}H(t))\label{eq:der of log as trac}
\end{equation}
By the cocycle property $\mathcal{L}(\phi_{0},\phi_{1})+\mathcal{L}(\phi_{1},\phi_{2})+\mathcal{L}(\phi_{2},\phi_{0})=0$
we have that $\frac{d}{dt}\mathcal{L}(\phi_{t},\phi_{'})$ is independent
of $\phi'$ and hence we can set $\phi'=\phi_{0}$ above so that $H(0)=I,$
which gives the desired formula. 
\end{proof}
We will also have great use for the following lemma, whose proof is
inspired by some arguments in \cite{so}:
\begin{lem}
\label{lem:deriv of trunc analyt torsion}The differential of the
functional $\phi\mapsto\tau_{\lambda}(\phi):=\log T_{]0,\lambda[}(e^{-\phi},h_{X})$
given by the truncated analytic torsion (formula \ref{eq:def of truncated torsion})
is given by 
\[
d\tau_{|\phi}=\sum_{q=1}^{n}(-1)^{q}\B_{]0,\lambda[}^{0,q}
\]
for a generic number $\lambda.$ In particular, the differential of
the truncated Quillen metric wrt $\phi$ satisfies 
\begin{equation}
d_{\phi}\mathcal{Q}_{]0,\lambda_{k}[}(e^{-\phi},h_{X})=\sum_{q=1}^{n}(-1)^{q}\B_{[0,\lambda[}^{0,q}.\label{eq:differential of Quillen metric}
\end{equation}
\end{lem}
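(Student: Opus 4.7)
The plan is to compute $d\tau_{|\phi}$ by direct first-order perturbation theory applied to each $\rho_{q}(\phi):=\log\det(1_{]0,\lambda[}(\Delta^{0,q}_{\phi}))$, and then to exploit the structure of the $\bar{\partial}$-complex via a combinatorial reindexing/telescoping argument to collapse the weighted alternating sum $-\sum q(-1)^{q}d\rho_{q}$ into the desired alternating sum of Bergman one-point measures. Formula (\ref{eq:differential of Quillen metric}) for the truncated Quillen metric will then follow by combining this with the differential of the $L^{2}$-part of the Quillen metric, provided by Lemma \ref{lem:deriv of e and l}(ii).

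First, I would note that since $\lambda$ is generic, no eigenvalue of $\Delta^{0,q}_{\phi}$ crosses $\lambda$ under a small perturbation of $\phi$, so that both the spectral projector $\Pi^{0,q}_{]0,\lambda[}$ and the dimension of $\mathcal{H}^{0,q}_{]0,\lambda[}$ depend smoothly on $\phi$. Standard perturbation theory (with appropriate care for the $\phi_{t}$-dependent inner product) then gives $\dot{\mu}_{i}=\langle\dot{\Delta}^{0,q}\Psi_{i},\Psi_{i}\rangle_{\phi}$ for a smoothly varying family of $\phi_{t}$-orthonormal eigenforms, and hence
\[
\dot{\rho}_{q}\;=\;\mbox{Tr}\bigl(R_{q}\,\dot{\Delta}^{0,q}\bigr),\qquad R_{q}:=(\Delta^{0,q})^{-1}\Pi^{0,q}_{]0,\lambda[}.
\]
Since only $\bar{\partial}^{*}$ depends on $\phi$, the conjugation identity $\bar{\partial}^{*}_{\phi+t\dot{\phi}}=e^{t\dot{\phi}}\bar{\partial}^{*}_{\phi}e^{-t\dot{\phi}}$ (obtained by conjugating $\bar{\partial}$ by the self-adjoint multiplier $e^{-t\dot{\phi}/2}$) yields $\dot{\bar{\partial}^{*}}=[M_{\dot{\phi}},\bar{\partial}^{*}]$, where $M_{\dot{\phi}}$ denotes multiplication by $\dot{\phi}$, so that $\dot{\Delta}^{0,q}=\bar{\partial}[M_{\dot{\phi}},\bar{\partial}^{*}]+[M_{\dot{\phi}},\bar{\partial}^{*}]\bar{\partial}$.

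Next, I would invoke cyclicity of the trace together with the commutations $R_{q}\bar{\partial}=\bar{\partial}R_{q-1}$ and $R_{q}\bar{\partial}^{*}=\bar{\partial}^{*}R_{q+1}$ (which hold since $\bar{\partial},\bar{\partial}^{*}$ commute with $\Delta$ and with its spectral cut-off), together with the identity $\bar{\partial}^{*}\bar{\partial}R_{q}=\Pi^{0,q}_{]0,\lambda[}-\bar{\partial}\bar{\partial}^{*}R_{q}$, to decompose
\[
\mbox{Tr}\bigl(R_{q}\dot{\Delta}^{0,q}\bigr)\;=\;\mbox{Tr}\bigl(M_{\dot{\phi}}\Pi^{0,q}_{]0,\lambda[}\bigr)-S_{q}-S_{q+1},\qquad S_{q}:=\mbox{Tr}\bigl(M_{\dot{\phi}}\bar{\partial}\bar{\partial}^{*}R_{q}\bigr),
\]
in which the first term equals $\int\dot{\phi}\,\B^{0,q}_{]0,\lambda[}$ by the very definition (\ref{eq:def of one correl measure with eigenv}) of the Bergman one-point measure.

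Finally, I would perform the weighted alternating sum. After reindexing the $\mbox{Tr}_{q-1}$-contribution by $p=q-1$, the elementary identity $(q+1)(-1)^{q}-q(-1)^{q}=(-1)^{q}$ collapses the weight $-q(-1)^{q}$ into the simple sign $(-1)^{q}$, while the residual terms assemble into $\sum(-1)^{q}(S_{q}+S_{q+1})$, which telescopes to a boundary term that vanishes because $S_{0}=0$ (as $\bar{\partial}^{*}\equiv 0$ on $(0,0)$-forms) and $S_{n+1}=0$ (no $(0,n+1)$-forms). What survives is exactly $\sum_{q}(-1)^{q}\int\dot{\phi}\,\B^{0,q}_{]0,\lambda[}$, yielding the stated formula for $d\tau_{|\phi}$; formula (\ref{eq:differential of Quillen metric}) then follows by adding the derivative of the $L^{2}$-part of the Quillen metric, which by Lemma \ref{lem:deriv of e and l}(ii) contributes the kernel Bergman measures $\B^{0,q}$, so that the total is $\B^{0,q}_{[0,\lambda[}=\B^{0,q}+\B^{0,q}_{]0,\lambda[}$. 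The main obstacle is orchestrating the trace calculus together with the combinatorial reindexing that produces the cancellation of the $q$-factor, with additional care required to handle the $\phi_{t}$-varying inner product in the perturbation-theoretic computation of $\dot{\mu}_{i}$.
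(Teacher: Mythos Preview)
Your overall strategy---compute $\dot\rho_q=\mbox{Tr}(R_q\dot\Delta^{0,q})$ via first-order perturbation theory, expand $\dot\Delta=\bar\partial[M_{\dot\phi},\bar\partial^*]+[M_{\dot\phi},\bar\partial^*]\bar\partial$, and then telescope the weighted alternating sum---is a legitimate operator-theoretic alternative to the paper's argument (which instead first collapses the weighted sum via the Hodge decomposition $\mathcal{H}^{0,q}_{]0,\lambda[}=M_q\oplus N_q$ and the bijection $\bar\partial:N_q\to M_{q+1}$, and only afterwards differentiates each $\log\det\Delta_{N_q}$ using a carefully chosen moving basis). However, your key trace identity is wrong. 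Setting $A_p:=\mbox{Tr}_p(M_{\dot\phi}\,\bar\partial^*\bar\partial\,R_p)$ and $S_p:=\mbox{Tr}_p(M_{\dot\phi}\,\bar\partial\bar\partial^*\,R_p)$ (so that $A_p+S_p=T_p:=\mbox{Tr}(M_{\dot\phi}\Pi^{0,p}_{]0,\lambda[})$), a direct computation of the four terms gives
\[
\mbox{Tr}(R_q\dot\Delta^{0,q})=A_{q-1}+A_q-S_q-S_{q+1},
\]
not $T_q-S_q-S_{q+1}$. The point is that $\mbox{Tr}_q(R_q\bar\partial M_{\dot\phi}\bar\partial^*)$, after $R_q\bar\partial=\bar\partial R_{q-1}$ and cyclicity, becomes a trace in degree $q-1$, namely $A_{q-1}$; it does not contribute to the degree-$q$ Bergman term. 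Your formula already fails for $n=1$, $q=1$: there $A_0=T_0$, $A_1=0$, $S_1=T_1$, $S_2=0$, so the correct value of $\dot\rho_1$ is $T_0-T_1$, whereas your formula gives $T_1-T_1-0=0$; with your decomposition $\dot\tau$ would vanish identically on a Riemann surface.

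With the corrected decomposition your scheme does work: writing $\dot\rho_q=(A_{q-1}+A_q)-(S_q+S_{q+1})$, the weighted sum $-\sum_q q(-1)^q$ applied to \emph{each} bracket collapses (via your identity $(q+1)(-1)^q-q(-1)^q=(-1)^q$, used once for the $A$-shift and once for the $S$-shift) to $\sum_q(-1)^qA_q+\sum_q(-1)^qS_q=\sum_{q\ge 0}(-1)^qT_q$, which is the desired formula. So the gap is precisely the mis-computed trace identity; your passing reference to a ``$\mbox{Tr}_{q-1}$-contribution'' suggests you sensed the degree-$(q-1)$ term, but it is absent from the decomposition you actually wrote, and the telescoping you describe (the $S$-sum vanishing while $T_q$ survives unchanged) is not what happens.
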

\begin{proof}
In the proof we will repeatedly use that $\bar{\partial}$ commutes
with $\Delta(:=\Delta_{\bar{\partial}})$ and hence if $\Psi^{0,q}$
is an eigenform of $\Delta^{0,q}$ then $\bar{\partial}\Psi^{0,q}$
is an eigenform of $\Delta^{0,q+1}$ unless it vanishes identically.
To fix ideas we start with

\emph{The case $n=1:$}

Since, $\Delta=\bar{\partial}^{*}\bar{\partial}$ on $\Omega^{0,q}(X,L)$
it follows immediately from the definition of the determinant that
\[
\det(1_{]0,\lambda[}(\Delta_{\bar{\partial}}^{(0)})=\det(\left\langle \Psi_{i},\Psi_{j}\right\rangle )^{-1}\det(\left\langle \bar{\partial}\Psi_{i},\bar{\partial}\Psi_{j}\right\rangle )
\]
 for any given base in $\mathcal{H}_{]0,\lambda[}^{0,q}$ (with $q=0).$
Next we note that, given the path $\phi_{t},$ we may find a path
$(\Psi_{i}^{0,q})(t)$ of bases in $\mathcal{H}_{]0,\lambda[}^{0,q}(t)$
such that $(\Psi_{i}^{0,q})(t)$ is orthonormal wrt $\phi_{0}$ for
$t=0$ and for any fixed index $i$ $(\Psi_{i}^{0,q})(0)$ is an eigenform
for $\Delta_{\phi_{0}}$ and 
\begin{equation}
\left\langle \frac{d}{dt}_{t=0}(\Psi_{i}^{0,q})(t),\mathcal{H}_{]0,\lambda[}^{0,q}(0)\right\rangle _{\phi_{0}}=0.\label{eq:og gauge}
\end{equation}
 To see this we first recall that, as a well-known consequence of
the ellipticity of Laplacians, for a generic $\lambda$ the family
$\mathcal{H}_{[0,\lambda[}^{0,q}(t)$ defines a vector bundle over
$\{t\}:=]-\epsilon,\epsilon[$ (see Proposition  1 on p. 123 in \cite{so}).
Moreover, since, for any $t,$ $\ker\Delta_{\bar{\partial}}^{0,q}(t)$
is isomorphic to the Dolbeault cohomology group $H_{\bar{\partial}}^{0,q}(X,L)$
it follows that $F:=\mathcal{H}_{]0,\lambda[}^{0,q}(t)$ is also a
vector bundle. Hence, we can start with an arbitrary smooth curve
$(s_{i})(t)$ of bases in $F_{t}$ satisfying the first requirements
above at $t=0.$ Then, for $t$ sufficiently small, $(\Psi_{i}^{0,q})(t):=s_{i}(t)-t\Pi_{t}(\frac{d}{dt}_{t=0}(s_{i})(t))$
has the desired properties, since $\Pi_{0}(\frac{d}{dt}_{t=0}(\Psi_{i}^{0,q})(t))=0,$
where $\Pi_{t}$ denotes the orthogonal projection onto $\mathcal{H}_{]0,\lambda[}^{0,q}(t).$

Now we can decompose 
\begin{equation}
\frac{d}{dt}_{t=0}\log\det(1_{]0,\lambda[}(\Delta_{\bar{\partial}}^{(0)})=-\frac{d}{dt}_{t=0}\log\det(\left\langle \Psi_{i}(t),\Psi_{j}(t)\right\rangle +\frac{d}{dt}_{t=0}\log\det(\left\langle \bar{\partial}\Psi_{i}(t),\bar{\partial}\Psi_{j}(t)\right\rangle ).\label{eq:pf of deriv of logdet lapla}
\end{equation}
Using formula formula \ref{eq:der of log as trac} and the fact that
\[
\frac{d}{dt}_{t=0}(\left\langle \Psi_{i}(t),\Psi_{j}(t)\right\rangle _{\phi_{t}}=\left\langle -\frac{d\phi_{t}}{dt}\Psi_{i}(t),\Psi_{j}(t)\right\rangle +0
\]
(by Leibniz rule and \ref{eq:og gauge}) the first term in \ref{eq:pf of deriv of logdet lapla}
above gives (also using that $H(0)=I)$ 
\[
-\frac{d}{dt}_{t=0}\log\det(\left\langle \Psi_{i}(t),\Psi_{j}(t)\right\rangle =\int_{X}\B_{]0,\lambda[}^{0,0}\frac{d\phi_{t}}{dt}.
\]
The second term is computed similarly, using that $\alpha_{i}:=\bar{\partial}\Psi_{i}/\left\Vert \bar{\partial}\Psi_{i}\right\Vert _{\phi_{0}}$
is an orthonormal base in $\mathcal{H}_{]0,\lambda[}^{0,1}(t)$ for
$t=0$ and that the relation \ref{eq:og gauge} still holds with $\Psi_{i}$
replaced by $\alpha_{i}$ ( indeed, since $\bar{\partial}$ commutes
with $\frac{d}{dt}$ and $\Delta_{\bar{\partial}}$ we get $\Pi_{]0,\lambda[}^{0,q+1}(\frac{d}{dt}\bar{\partial}\Psi)=\bar{\partial}\Pi_{]0,\lambda[}^{0,q}(\frac{d}{dt}\Psi)=0).$ 

\emph{General $n$}

By Hodge theory we have a decomposition 
\[
\Omega^{0}(X,L)=\ker\Delta_{\bar{\partial}}\oplus\mbox{Im}\bar{\partial}\oplus\mbox{}\bar{\partial}^{*},
\]
which is orthogonal wrt the corresponding Hermitian product. Restricting
to $\mathcal{H}_{]0,\lambda[}^{0,q}$ (which by definition is in the
orthogonal complement of $\ker\Delta_{\bar{\partial}}$ in $\Omega^{0}(X,L))$
gives an induced orthogonal decomposition 
\[
\mathcal{H}_{]0,\lambda[}^{0,q}=M_{q}\oplus N_{q}.
\]
Next, we note that $\bar{\partial}$ induces a bijection, intertwining
the corresponding restricted Laplacians, such that 
\begin{equation}
\bar{\partial}:\,\,\, N_{q}\rightarrow M_{q+1},\,\,\,\Delta_{N_{q}}=\bar{\partial}^{*}\bar{\partial}\label{eq:of of deriv of log det: bijec}
\end{equation}
 and hence $\det(\Delta_{M_{q}})=\det(\Delta_{N_{q-1}})$ so that
$\det(1_{]0,\lambda[}(\Delta_{\bar{\partial}}^{(q)})=\det(\Delta_{N_{q-1}})\det(\Delta_{N_{q}}).$
The latter relation implies, since $\det(\Delta_{N_{-1}}):=1=\det(\Delta_{N_{n}})$
that 
\begin{equation}
-\sum_{q=1}^{n}q(-1)^{q}\log\det(1_{]0,\lambda[}(\Delta_{\bar{\partial}}^{(q)})=\sum_{q=1}^{n}(-1)^{q}\log\det(\Delta_{N_{q}}).\label{eq:log det using hodge}
\end{equation}
Using the bijection \ref{eq:of of deriv of log det: bijec} we can
now repeat the arguments used above (when $n=1)$ to deduce that 
\[
\frac{d}{dt}_{t=0}\log(\det(\Delta_{N_{q}})=\int_{X}(\B_{]0,\lambda[\cap N_{q}}^{0,q}-\sum_{q=1}^{n}\B_{]0,\lambda[\cap M_{q+1}}^{0,q+1})\frac{d\phi_{t}}{dt},
\]
where the intersection with $N_{q}$ indicates that we have replaced
$\mathcal{H}_{]0,\lambda[}^{0,q}$ with its subspace $N_{q}$ in the
definition \ref{eq:def of one correl measure with eigenv} (and similarly
for $M_{q+1}).$ Hence, taking the alternating sum over $q$ and using
\ref{eq:log det using hodge} proves the lemma in general dimensions.
Finally, formula \ref{eq:differential of Quillen metric} now follows
by invoking the formula $(ii)$ in the previous lemma. 
\end{proof}

\subsection{Asymptotics}

Next we recall the following asymptotics from \cite{berm0} (which
can be seen as a local version of Demailly's strong holomorphic Morse
inequalities):
\begin{prop}
\label{pro:strong morse}Let $\lambda_{k}$ be a sequence of positive
numbers such that $\lambda_{k}=O(k^{1-\epsilon})$ for some $\epsilon<1/2.$
Then the sequence $k^{-n}\B_{[0,\lambda_{k}[}^{0,q}/dV$ is uniformly
bounded and \textup{
\[
k^{-n}\B_{[0,\lambda_{k}[}^{0,q}\rightarrow(-1)^{q}1_{X(q)}(dd^{c}\phi)^{n}/n!
\]
weakly as $k\rightarrow\infty,$ where $X(q)$ is the subset of $X$
where $dd^{c}\phi$ has exactly $q$ negative eigenvalues. In particular,
\[
\mathcal{\dim H}_{[0,\lambda_{k}[}^{0,q}=k^{n}\int_{X}(-1)^{q}1_{X(q)}(dd^{c}\phi)^{n}/n!+o(k^{n}).
\]
}\end{prop}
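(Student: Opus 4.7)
The plan is to prove Proposition \ref{pro:strong morse} by a semi-classical localization procedure that reduces the pointwise behaviour of the density $k^{-n}\B^{0,q}_{[0,\lambda_k[}/dV$ at a point $x_0\in X$ to an explicit model problem on $\C^n$ determined by the Hessian $dd^c\phi(x_0)$. Three steps are needed: (1) off-diagonal localization of the spectral projector on scale $1/\sqrt k$; (2) identification of the frozen-coefficient model and its spectrum; (3) extraction of the weak limit and the uniform bound.

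First I would establish Agmon-type off-diagonal estimates for the spectral projector $\mathbf{1}_{[0,\lambda_k[}(\Delta^{(k)}_{\bar\partial})$ acting on $\Omega^{0,q}(X,kL)$. Using the Bochner--Kodaira--Nakano identity one sees that $k^{-1}\Delta^{(k)}_{\bar\partial}$ is, to leading order, a Schr\"odinger-type operator with semi-classical parameter $1/k$; standard semi-classical technology (exponential weights $e^{\sqrt k\, d(x,y)}$ applied to a suitable contour representation of the spectral projector) then yields that the Schwartz kernel of the projector decays faster than any polynomial outside an $O(1/\sqrt k)$ neighborhood of the diagonal. Consequently, evaluating the density of $\B^{0,q}_{[0,\lambda_k[}$ at $x_0$ only requires information from a shrinking neighborhood of $x_0$.

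Next I would work in holomorphic normal coordinates at $x_0$ with a trivialization of $L$ in which $\phi(z)=\phi(x_0)+\sum_{j=1}^{n}\lambda_j|z_j|^2+O(|z|^3)$, the $\lambda_j$ being (a universal constant times) the eigenvalues of $dd^c\phi$ at $x_0$. Under the dilation $z=w/\sqrt k$, the operator $k^{-1}\Delta^{(k)}_{\bar\partial}$ converges to the model Kodaira Laplacian $\Delta^{\mathrm{mod}}_{x_0}$ on $\Omega^{0,q}(\C^n)$ with quadratic weight $\sum_j\lambda_j|w_j|^2$. The spectrum of $\Delta^{\mathrm{mod}}_{x_0}$ is an explicit Landau-type ladder: it has a uniform spectral gap above $0$ proportional to $\min_j|\lambda_j|$, and its kernel on $(0,q)$-forms is non-trivial precisely when $q$ equals the number of negative eigenvalues of $dd^c\phi(x_0)$, i.e.\ when $x_0\in X(q)$. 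In that case the pointwise density at the origin of the orthogonal projection onto this kernel is $|\det dd^c\phi(x_0)|/n!$ up to the universal factor accounting for the $(0,q)$-component.

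To conclude, I combine the two steps: since $\lambda_k/k\to 0$, the rescaled window $[0,\lambda_k/k[$ sits strictly below the model spectral gap for $k$ large, so after rescaling $\B^{0,q}_{[0,\lambda_k[}(x_0)$ is captured by the model Bergman kernel at the origin, yielding $k^{-n}\B^{0,q}_{[0,\lambda_k[}\to(-1)^q \mathbf{1}_{X(q)}(dd^c\phi)^n/n!$ against any test function. Uniform boundedness follows from the same mechanism, using the pointwise Gaussian bound for the model Bergman kernel together with the localization estimate of step one. Integrating the weak limit against $1$ and invoking dominated convergence then delivers the stated asymptotic dimension formula. The main obstacle is the localization: one needs off-diagonal decay on scale $1/\sqrt k$ for the \emph{spectral projector} $\mathbf{1}_{[0,\lambda_k[}$ itself, not merely for the heat kernel $e^{-t\Delta^{(k)}/k}$, because eigenforms with exponentially small eigenvalues are a priori free to delocalize. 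The hypothesis $\lambda_k=O(k^{1-\epsilon})$ with $\epsilon<1/2$ is used precisely at this step, to control these low-lying modes via a suitable contour representation of the projector and thereby match the rescaled problem to the model.
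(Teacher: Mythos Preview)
Your strategy---localize the Schwartz kernel of $\mathbf{1}_{[0,\lambda_k[}(\Delta^{(k)}_{\bar\partial})$ to scale $1/\sqrt k$, rescale, and read off the diagonal from the model on $\C^n$---is not the route taken in the paper (which defers to \cite{berm0}), and the step you yourself flag as ``the main obstacle'' is a genuine gap. Agmon-type off-diagonal decay for the spectral projector is \emph{not} available here by routine semi-classical arguments: the eigenvalues of $\Delta^{(k)}_{\bar\partial}$ can accumulate exponentially close to $0$ (this is in fact the whole point of the paper), so the resolvent $(z-\Delta)^{-1}$ has no uniform bound on any contour enclosing $[0,\lambda_k[$, and the exponential-weight/contour-representation machinery you invoke does not go through as stated. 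Your assertion that the hypothesis on $\epsilon$ resolves this ``via a suitable contour representation'' is not substantiated; in the actual argument the restriction on $\lambda_k$ enters in a completely different place---a quasi-mode error estimate---and has nothing to do with localizing the projector kernel.

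The method of \cite{berm0} sidesteps off-diagonal control of the projector entirely by treating the two inequalities separately through the extremal characterization $\B^{0,q}_{[0,\lambda_k[}(x)/dV=\sup\{|\alpha(x)|^2_\phi:\alpha\in\mathcal H^{0,q}_{[0,\lambda_k[},\ \|\alpha\|_\phi=1\}$. The upper bound (and the uniform bound) comes from a local submean inequality for low-energy eigenforms---scaled elliptic regularity on balls of radius $R_k/\sqrt k$ with $R_k=\log k$---compared against the model Bergman function; this holds for any $\lambda_k=k\mu_k$ with $\mu_k\to0$. The lower bound is by explicit quasi-modes: one builds a localized $\alpha_k$ at $x$ with the correct peak value and $k^{-1}\langle\Delta\alpha_k,\alpha_k\rangle\le\delta_k\|\alpha_k\|^2$ (cf.\ the proof of Proposition~\ref{pro:quasi-mode} for $q=0$; for $q>0$ the phase of \cite{b-sj} is used), then projects onto $\mathcal H^{0,q}_{[0,\lambda_k[}$. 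The projection error in $L^2$ is $O(\delta_k/\mu_k)$, and the paper's contribution here is to observe that $\delta_k$ can be taken as $k^{-1/2+\delta}$ for any $\delta>0$, which is precisely where the growth condition on $\lambda_k$ is consumed.
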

\begin{proof}
As shown in \cite{berm0} (Proposition  5.1) the upper bound in the
convergence above holds for any sequence $\lambda_{k}=k\mu_{k}$ with
$\mu_{k}\rightarrow0.$ As for the lower bound it was was shown to
hold as long as $\delta_{k}/\mu_{k}\rightarrow0$ for $\delta_{k}$
a certain non-explicit sequence tending to zero (Proposition  5.3).
This latter sequence appears in Lemma 5.2 in \cite{berm0} and the
proof given there actually shows that $\delta_{k}$ can be taken as
$\delta_{k}=1/k^{1/2-\delta}$ for any $\delta>0$ (since the radius
$R_{k}$ appearing in that proof is equal to $\log k).$ A further
refinement of this argument will be considered in Proposition  \ref{pro:quasi-mode}.
\end{proof}
The previous asymptotics will allow us to obtain the asymptotics of
truncated analytic torsions (and hence of exponentially small eigenvalues).
Using the next proposition we will then deduce the corresponding asymptotics
for the usual analytic torsions.
\begin{prop}
\label{pro:anal torsion and its trunc}The following asymptotics hold
for any line bundle $L\rightarrow X$ and smooth Hermitian metrics
on $L$ and $X:$ 
\[
\log(\det(\Delta_{k}^{(q)})=\log(\det(1_{]0,\lambda_{k}]}(\Delta_{k}^{(q)}))+O(k^{n+\epsilon})
\]
if $\lambda_{k}=O(k^{1-\epsilon})$ for a given $\epsilon\in]0,1[.$ \end{prop}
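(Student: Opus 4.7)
The plan is to identify the difference $\log\det(\Delta_k^{(q)}) - \log\det(1_{]0,\lambda_k]}(\Delta_k^{(q)}))$ with the zeta-regularized contribution of the eigenvalues above the cutoff, and to bound that contribution by $O(k^{n+\epsilon})$. Decomposing the spectral zeta function $\zeta_k^{(q)}(s) = \sum_{\lambda_i > 0}\lambda_i^{-s}$ as $\zeta_\leq(s) + \zeta_>(s)$ along $\lambda_k$, the piece $\zeta_\leq$ is a finite sum with $-\zeta_\leq'(0) = \log\det(1_{]0,\lambda_k]}(\Delta_k^{(q)}))$, so the problem reduces to the bound $|\zeta_>'(0)| = O(k^{n+\epsilon})$.

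To estimate $\zeta_>'(0)$ I would use the Mellin representation $\Gamma(s)\zeta_>(s) = \int_0^\infty t^{s-1}\Theta_>(t)\,dt$ with $\Theta_>(t) := \sum_{\lambda_i > \lambda_k} e^{-t\lambda_i}$, and split the integral at $t_0 := 1/\lambda_k$ (of order $k^{\epsilon-1}$). On the tail $t \geq t_0$, the inequality $\Theta_>(t) \leq e^{-t\lambda_k/2}\mbox{Tr}(e^{-(t/2)\Delta_k^{(q)}})$ combined with a uniform semiclassical heat-trace bound $\mbox{Tr}(e^{-t\Delta_k^{(q)}}) = O(k^n)$ valid for $t \gtrsim 1/k$ yields rapid exponential decay, contributing $O(k^n)$ to $\zeta_>'(0)$. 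On the complementary interval $0 < t \leq t_0$, use the uniform local bound $\Theta_>(t) \leq \mbox{Tr}(e^{-t\Delta_k^{(q)}}) \leq C(t^{-n} + k^n)$, which reflects that the $j$-th heat coefficient is a polynomial in $k$ of degree at most $j$ (the only $k$-dependent ingredient being the bundle curvature $k\omega$). After subtracting the polar terms $\sum_{j=0}^{n-1} b_j(k) t^{j-n}$ of the heat expansion and continuing analytically through $s=0$, the explicit residue contributions are of size $O(k^j\lambda_k^{n-j}) = O(k^{n - \epsilon(n-j)}) \leq O(k^n)$, while the regular remainder yields $O(k^n\log(1/t_0)) = O(k^n\log k)$. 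Both lie within $O(k^{n+\epsilon})$.

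The main technical obstacle is the uniform heat-trace estimates with explicit $k$-tracking --- namely $\mbox{Tr}(e^{-t\Delta_k^{(q)}}) \leq C(t^{-n}+k^n)$ for all $t>0$ and the sharper bound $O(k^n)$ for $t \gtrsim 1/k$. These require a rescaled parametrix construction for $\Delta_k^{(q)}$ in the semiclassical regime that keeps track of the polynomial $k$-dependence of all heat coefficients. Such estimates are essentially those used by Bismut-Vasserot \cite{b-v-} in the K\"ahler case and adapt to the present non-K\"ahler setting via the off-diagonal Bergman-kernel estimates of \cite{berm0}. Once granted, the Mellin-transform argument above produces the asserted $O(k^{n+\epsilon})$ bound essentially formally.
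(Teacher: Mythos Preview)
Your approach is essentially the same as the paper's: both reduce the statement to bounding $\zeta'_{>}(0)=O(k^{n+\epsilon})$ for the spectral zeta function of the eigenvalues above the cutoff, and both control this via the Mellin transform of the (truncated) heat trace, with the uniform semiclassical heat-kernel asymptotics of Bismut--Vasserot as the key analytic input.

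The only organizational difference is the split point. The paper rescales time by $1/k$ (working with $\widetilde\zeta_D:=k^{-n}\zeta_{k^{-1}D}$) and splits at rescaled time $\tau=1$; this has the advantage that the Bismut--Vasserot expansion is stated precisely in that form, so the short-time contribution and the value $\widetilde\zeta(0)=O(1)$ drop out immediately, and the tail integral $\int_1^\infty$ is bounded by $Ck^{\epsilon}$ via $e^{-(t-1)k^{-\epsilon}}$. Your split at $t_0=1/\lambda_k$ works too, but forces you to unpack the heat coefficients individually (your ``$c_j=O(k^j)$'' claim), which is just the Bismut--Vasserot expansion rewritten. Two small corrections: the correct citation for the uniform heat-trace asymptotics is \cite{b-vas} (not \cite{b-v-}, which is Alvarez-Gaum\'e et al.), and those asymptotics already hold without any K\"ahler assumption, so no adaptation via \cite{berm0} is needed; the paper invokes \cite{b-vas} directly.
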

\begin{proof}
We will adapt to our setting some arguments from \cite{b-vas}, where
it was among other things shown that $\log(\det(\Delta_{k}^{(q)})=O(k^{n})\log k$
if the metric on $L$ is positively curved (the point being that the
smallest positive eigenvalue of $\Delta_{k}^{(q)}$ is then of the
order $k).$ As follows immediately from the definition (see below)
the statement of the proposition to be proved is equivalent to 
\begin{equation}
\zeta_{1_{[\lambda_{k},\infty[}(\Delta^{(q)})}'(0)=O(k^{n+\epsilon}).\label{eq:equiv form of prop anal trunc}
\end{equation}
To prove the latter asymptotics we first recall some general facts
about spectral zeta functions (following chapter V in \cite{so}).
If $D$ is an Hermitian non-negative operator on a Hilbert space then
its spectral zeta function is defined by 
\[
\zeta_{D}(s)=\frac{1}{\Gamma(s)}\int_{0}^{\infty}(t^{s}\mbox{(Tr(}e^{-tD})-\Pi_{D})\frac{dt}{t}
\]
where $\Pi_{D}$ denotes the orthogonal projection on the kernel of
$D$ (we assume that $\zeta_{D}(s)$ exists for $s$ a complex number
of sufficiently negative real part and is then analytically contained
to other values for $s$). In other words $\zeta_{D}(s)$ is the Mellin
transform of the spectral theta function 
\[
\Theta_{D}(t):=\mbox{(Tr(}e^{-tD})-\Pi_{D})=\sum_{\nu_{i}>0}e^{-t\nu_{i}}
\]
 summing over the positive eigenvalues of $D.$ Next we assume that 
\begin{itemize}
\item $\Theta_{D}(t)$ converges for $t>0$
\item For every positive integer $M$ there are real numbers $a_{i}=0$
with $a_{i}=0$ for $i<-n$ such that 
\begin{equation}
\Theta_{D}(t)=\sum_{j=-n}^{j=M}a_{j}t^{j}+O(t^{M+1})\label{eq:exp of theta}
\end{equation}
uniformly when $t\rightarrow0.$
\end{itemize}
Under these assumptions one obtains (see formula 11 on p. 99 in \cite{so})
\begin{equation}
\zeta{}_{D}(0)=a_{0},\,\,\,\,\,\zeta'_{D}(0)=\int_{1}^{\infty}\Theta_{D}(t)\frac{dt}{t}+\left(\gamma a_{0}+\sum_{j<0}\frac{a_{j}}{j}+\int_{0}^{1}\rho_{0}(t)\frac{dt}{t}\right)\label{eq:propert of mellin}
\end{equation}
where $\rho_{0}(t)$ is the $O(t^{M+1})$ term in \ref{eq:exp of theta}
for $M=0$ and $\gamma$ denotes Euler's constant. We also define
a scaled version of $\zeta_{D}$ by setting $\widetilde{\zeta}_{D}:=k^{-n}\zeta_{k^{-1}D}$
so that the derivative $\zeta'_{D}(0)$ at $s=0$ satisfies 
\begin{equation}
k^{-n}\zeta'_{D}(0)=-\log k\widetilde{\zeta}_{D}(0)+\widetilde{\zeta}'_{D}(0)\label{eq:chain rule for zeta}
\end{equation}
(compare formula 40 in \cite{b-vas}). 

We now come back to the present complex geometric setting. As shown
in \cite{b-vas} (Theorem 2), for any positive integer $M,$ there
is an asymptotic expansion 
\begin{equation}
k^{-n}\mbox{Tr(}e^{-tk^{-1}\Delta^{(q)}})\frac{dt}{t}=\sum_{j=-n}^{j=M}a_{j}^{q}(k)t^{j}+O(t^{M+1}),\label{eq:asympt exp of heat}
\end{equation}
when $t\rightarrow0$ uniformly for $t\in[0,1]$ and moreover 
\begin{equation}
a_{j}^{q}(k)=a_{j}^{q}+O(k^{-1/2}),\label{eq:conv of heat coeff}
\end{equation}
as $k\rightarrow\infty.$ Next, we note that, for $t\in[0,1]$ 
\begin{equation}
k^{-n}\mbox{Tr(}e^{-tk^{-1}\Delta^{(q)}})-k^{-n}\mbox{Tr(}e^{-tk^{-1}1_{[k^{1-\epsilon},\infty[}(\Delta^{(q)})})=\int(-1)^{q}1_{X(q)}(dd^{c}\phi)^{n}/n!+o(1)\label{eq:contrib of small eigenv}
\end{equation}
 uniformly when $k\rightarrow\infty$ (strictly speaking these asymptotics
only hold when $\epsilon<1/2$ but in general the argument will show
that the lhs above is uniformly bounded which is all that will be
used to deduce the lemma). Indeed, the left hand side above may be
written and estimated from below and above as 
\[
e^{-tk^{-\epsilon}}k^{-n}\mathcal{\dim H}_{]0,\lambda_{k}[}^{0,q}\leq k^{-n}\sum_{i}e^{-tk^{-1}\nu_{i,k}}\leq k^{-n}\mathcal{\dim H}_{]0,\lambda_{k}[}^{0,q},
\]
which combined with Proposition  \ref{pro:strong morse} proves the
previous formula. Now combining \ref{eq:asympt exp of heat}, \ref{eq:conv of heat coeff}
and \ref{eq:asympt exp of trunc heat} gives an expansion 
\begin{equation}
k^{-n}\mbox{Tr(}e^{-tk^{-1}1_{[k^{1-\epsilon},\infty[}(\Delta^{(q)})})=\sum_{j=-n}^{j=M}b_{j}^{q}t^{j}+O(t^{M+1})+o(1)\label{eq:asympt exp of trunc heat}
\end{equation}
uniformly for $t\in[0,1]$ as $k\rightarrow\infty$ (where $b_{j}^{q}=a_{j}^{q}$
for $j\neq0).$ Accordingly, it follows from \ref{eq:propert of mellin}
that 
\begin{equation}
\widetilde{\zeta}{}_{\Delta^{(q)}}(0)=-b_{0}^{q}(k)=-b_{0}^{q}+o(1)\label{eq:zeta function of trunc}
\end{equation}
and 
\[
\widetilde{\zeta}{}_{1_{[\lambda_{k},\infty[}(\Delta^{(q)})}(0)=-(b_{0}^{q}-\int_{X(q)}(-1)^{q}(dd^{c}\phi)^{n}/n!)+o(1).
\]
Next, we apply the derivative formula in \ref{eq:propert of mellin}
to $D=k^{-1}1_{[k^{1-\epsilon},\infty[}(\Delta^{(q)}$ to get 
\[
\widetilde{\zeta}_{1_{[k^{1-\epsilon},\infty[}(\Delta^{(q)})}'(0)=\int_{1}^{\infty}k^{-n}\mbox{Tr(}e^{-tk^{-1}1_{[k^{1-\epsilon},,\infty[}(\Delta^{(q)})})\frac{dt}{t}+O(1),
\]
also using \ref{eq:asympt exp of trunc heat}. From the basic estimate
\[
k^{-n}\mbox{Tr(}e^{-tk^{-1}1_{[\lambda_{k},\infty[}(\Delta^{(q)})})\leq e^{-(t-1)k^{-1}\lambda_{k}}(k^{-n}\mbox{Tr(}e^{-k^{-1}(\Delta^{(q)})}))
\]
we deduce, since the second factor is uniformly bounded according
to \ref{eq:asympt exp of heat} applied to $t=1$ and since we have
assumed $\lambda_{k}=O(k^{1-\epsilon})$ the bound 
\[
\int_{1}^{\infty}k^{-n}\mbox{Tr(}e^{-tk^{-1}1_{[k^{1-\epsilon},,\infty[}(\Delta^{(q)})})\frac{dt}{t}\leq C\int_{1}^{\infty}e^{-(t-1)k^{-\epsilon}}\frac{dt}{t}\leq C'k^{\epsilon}.
\]
All in all this means that $\widetilde{\zeta}_{1_{[k^{1-\epsilon},\infty[}(\Delta^{(q)})}'(0)=O(k^{\epsilon})$
which combined with \ref{eq:chain rule for zeta} and \ref{eq:zeta function of trunc}
proves \ref{eq:equiv form of prop anal trunc} and hence finishes
the proof of the proposition. 
\end{proof}

\subsection{Proof of Theorem \ref{thm:quillen}}

We start with a given $\phi$ and take a smooth path $\phi_{t}$ such
that $\phi_{1}=\phi$ and $\phi_{0}$ has positive curvature. Setting
\begin{equation}
f_{k}(t):=k^{-(n+1)}(\log T_{]0,\lambda_{k}[}(e^{-k\phi_{t}},h_{X})+\sum_{q=0}^{n}(-1)^{q}\mathcal{L}^{0,q}(k\phi_{t})\label{eq:pf of thm analyt}
\end{equation}
we have to prove that

\begin{equation}
\frac{1}{V}\lim_{k\rightarrow\infty}(f_{k}(1)-f_{k}(0))=\mathcal{E}(\phi_{1})-\mathcal{E}(\phi_{0})(:=\mathcal{E}(\phi_{1},\phi_{0})).\label{eq:step one in conv of anal torsion}
\end{equation}
To this end note that combining Lemma \ref{lem:deriv of e and l}
and Lemma \ref{lem:deriv of trunc analyt torsion} gives $\frac{df_{k}(t)}{dt}=$
\[
=k^{-n}\int_{X}(\sum_{q=1}^{n}(-1)^{q}(\B_{\phi_{t},]0,\lambda_{k}[}^{0,q}+\B_{\phi_{t}}^{0,1})\frac{d\phi_{t}}{dt}=k^{-n}\int_{X}(\sum_{q=1}^{n}(-1)^{q}\B_{\phi,[0,\lambda_{k}[}^{0,q})\frac{d\phi_{t}}{dt}.
\]
 Using Proposition  \ref{pro:strong morse} together with the dominated
convergence theorem hence gives 
\[
\frac{1}{V}\lim_{k\rightarrow\infty}(f_{k}(1)-f_{k}(0))=\lim_{k\rightarrow\infty}\int_{0}^{1}\frac{df_{k}(t)}{dt}dt=\frac{1}{Vn!}\int_{0}^{1}\int_{X}(dd^{c}\phi_{t})^{n}\frac{d\phi_{t}}{dt}dt.
\]
According to the variational characterization of $\mathcal{E}$ in
Lemma \ref{lem:deriv of e and l} (or by directly computing the integral)
this finishes the proof of \ref{eq:step one in conv of anal torsion}.
Finally, by the asymptotics \ref{pro:anal torsion and its trunc}
this finishes the proof of the theorem.

\subsection{Proof of Theorem \ref{thm:analyt torsion for ample}}

By the Kodaira vanishing theorem we have that $H^{0,q}(X,kL)=\{0\}$
for $k>>1$ (since $L$ is assumed ample) and hence $\mathcal{L}^{0,q}(k\phi_{t})=0$
for $q>0.$ By Theorem A in \cite{b-b} $(\mathcal{L}(\phi_{1})-\mathcal{L}_{k}(\phi_{0})/k^{(n+1)}$
converges to $(\mathcal{E}(P\phi_{1})-\mathcal{E}(P\phi_{0})$ and
hence the previous theorem (or rather its proof) gives 
\[
\lim_{k\rightarrow\infty}\log\left(T_{]0,\lambda_{k}[}(e^{-k\phi_{1}},\omega_{0})/T_{]0,\lambda_{k}[}(e^{-k\phi_{0}},\omega_{0})\right)=(\mathcal{E}(\phi_{1})-\mathcal{E}(\phi_{0}))-(\mathcal{E}(P\phi_{1})-\mathcal{E}(P\phi_{0}))
\]
Since, $\phi_{0}$ is assumed positively curved we have on one hand
that $P\phi_{0}=\phi_{0}$ so that the terms involving $\phi_{1}$
in the rhs above cancel. On the other hand, the smallest positive
eigenvalue $\lambda_{k}^{0,q}$of $\Delta_{k\phi_{0}}^{0,q}$ satisfies
$\lambda_{k}^{0,q}\geq Ck$ (see below) and hence the second term
in the lhs above also vanishes. All in all this gives the convergence
of the truncated analytic torsion in Theorem \ref{thm:analyt torsion for ample}
and by Proposition  \ref{pro:anal torsion and its trunc} the same
asymptotics then hold for the analytic torsions. As for the eigenvalue
estimate used above it is a standard consequence of the Kodaira-Nakano
identity \cite{gr-ha} in the case when $\omega$ is Kähler. In the
non-Kähler case it was shown in \cite{b-vas} (Theorem 1).
\begin{rem}
The eigenvalue estimate referred to above can also be deduced from
the Kähler case as follows: first one notes that the first positive
eigenvalue on the subspace $\ker\bar{\partial}$ of the $\bar{\partial}-$Laplacian
associated to $(k\phi,h_{X})$ is the supremum over all constants
$C_{k}^{(q)}(h_{X})$ such that for any $f,$ a $\bar{\partial}-$closed
$(0,1)-$form with values in $L,$ the inhomogeneous $\bar{\partial}-$equation
$\bar{\partial}u=f$ for can be solved with an estimate 
\[
\left\Vert u\right\Vert _{(k\phi,h_{X})}^{2}\leq\frac{1}{C_{k}^{(q)}(\omega)}\left\Vert f\right\Vert _{(k\phi,h_{X})}^{2}.
\]
But since $X$ is compact there is a constant $A$ such that $A^{-1}\omega_{0}\leq h_{X}\leq A\omega_{0}$
for $\omega_{0}$ a fixed Kähler metric and hence replacing $h_{X}$
with $\omega_{0}$ only distorts $C_{k}^{(q)}(h_{X})$ with a multiplicative
constant independent of $k.$ Finally, using the bijection \ref{eq:of of deriv of log det: bijec}
(and since $H^{0,q}(X,kL)=\{0\}$ for $k>>1)$ and the fact that $\bar{\partial}$
commutes with $\Delta$ this gives the desired lower bound for the
first positive eigenvalue on all of $\Omega^{0,q}(X,kL)$ for all
$q.$
\end{rem}

\subsection{Proof of Corollary \ref{cor:exist of small eig}}

Let $\lambda_{k}=k^{1-\epsilon}$ for some $\epsilon\in]1/2,1[.$By
Proposition  \ref{pro:strong morse} the number of terms in the sum
\begin{equation}
k^{-(n+1)}\log\det(1_{]0,\lambda_{k}[}(\Delta_{\bar{\partial}}^{(q)})=\frac{1}{k^{n}}\sum_{i}\frac{1}{k}\log\lambda_{i,k}^{(q)}\label{eq:sum of log einvalues for q}
\end{equation}
grows as constant times $k^{n}.$ Hence, if there were no sequence
of eigenvalues as in the statement of the corollary then the previous
sum would converge to zero for any $q$ contradicting the positivity
assumption in the corollary (according to the previous theorem). As
for the final statement we first note that, when $n=1:$ 
\[
\mathcal{E}(P\phi,\phi)=\frac{1}{2}\left\Vert d(P\phi-\phi)\right\Vert _{X}^{2},
\]
which follows from integration by parts, using the ``orthogonality
relation'' $\int(P\phi-\phi)dd^{c}(P\phi)=0$ \cite{b-b}. Hence
$\mathcal{E}(P\phi,\phi)\geq0$ with equality iff $P\phi=\phi+c$
for some constant $c.$ But then it follows from the definition of
$P\phi$ that $c=0,$ i.e. $\phi$ has semi-positive curvature. 
\begin{rem}
\label{rem:referee}By way of comparison let us briefly explain how
to deduce the asymptotics for the (non-truncated) Quillen metric Theorem
\ref{thm:quillen} from the results in \cite{b-g-s,b-v-,m-m} (following
the suggestions of a referee). First, by \cite[III, Theorem 1.18]{b-g-s}
(or \cite[Theorem 5.5.6]{m-m}) the differential of the Quillen metric
associated to $(kL,e^{-k\phi},h_{X})$ can be expressed as the constant
term $C_{k,0}$ in the $t-$expansion $C_{k,t}$ of the super trace
of the heat-kernel associated to the Kodaira-Dolbeault Laplacian (this
is the ``non-truncated'' version of Lemma \ref{lem:deriv of trunc analyt torsion}).
By \cite[Theorem 2]{b-v-} the corresponding heat-kernel asymptotics
are uniform in $t\in[0,T]$ as $k\rightarrow\infty$ and calculating
$C_{k,0},$ by comparing with a constant coefficient model operator,
gives that $C_{k,0}=k^{n+1}(dd^{c}\phi)^{n}+o(k^{n+1})$ (this is
the ``non-truncated'' heat-kernel analog of Proposition \ref{pro:strong morse}).
Finally, the proof is concluded by integrating along line segments
in the space of metrics on $L,$ just as above (see also \cite[Theorem 5.5.9]{m-m}
for a detailed proof in the non-Kähler case).
\end{rem}

\section{Further remarks on tunneling and outlook}

\subsection{\label{sub:Comparison-with-the}Comparison with the tunneling effect
for Witten Laplacians and large deviations for fermions}

It may be illuminating to compare the asymptotics of the truncated
analytic torsions in Theorem \ref{thm:analyt torsion for ample} with
the well-known results concerning the tunneling effect for Witten's
deformation of the De Rham complex, which appeared in Witten's heuristic
approach to Morse theory based on supersymmetric quantum mechanics
\cite{wi} (see \cite{h-s} for rigorous results based on semi-classical
analysis). Geometrically this latter setting corresponds to letting
$L\rightarrow X$ be the\emph{ trivial} line bundle over a\emph{ real}
manifold $X$ and replacing the operator $\bar{\partial}$ with the
exterior derivative $d.$ Any given \emph{global} function $\phi$
on $X$ induces a Hermitian metric $h_{kL}$ on $kL$ represented
as $h_{kL}=e^{-k\phi}$ in the standard global trivialization (i.e.
$s=1)$ of $L,$ where now $k$ makes sense for any positive number
and where the semi-classical parameter $\hbar:=1/k$ corresponds to
Planck's constant in the quantum mechanical picture. To this setting
one associates as before a Laplacian on $(0,q)-$forms, $\Delta_{k}^{(q)},$
depending on $k\phi,$ which may be identified with the\emph{ Witten
Laplacian} (in a unitary frame). Assuming that $\phi$ is a Morse
function we denote by $X(q)$ the finite set of all critical points
of $\phi$ where the Hessian of $\phi$ has index $q.$ As shown in
\cite{h-s} the set of all ``small'' eigenvalues $(\lambda_{i,k}^{(q)})$
of the Witten Laplacian $\Delta_{k}^{(q)}$ is in a one to one correspondence
with the finite set $X(q).$ Moreover, all the non-zero eigenvalues
are actually \emph{exponentially} small: 
\begin{equation}
\lim_{k\rightarrow\infty}\frac{1}{k}\log\lambda_{i,q}^{(k)}=-c_{i,q}(\phi),\label{eq:witten eigenv asympt}
\end{equation}
where the positive number $c_{i,q}(\phi)$ may be expressed in terms
of an Agmond distance betweeen critical points. In the present complex
geometric setting of the $\bar{\partial}-$Laplacian the number of
``small'' eigenvalues $N_{q}^{(k)}$ depends on $k$ and is of the
order $O(k^{n}),$ more precisely: 
\[
N_{q}^{(k)}=k^{n}\int_{X(q)}(-1)^{q}(dd^{c}\phi)^{n}/n!+o(k^{n}),
\]
where now $X(q)$ is the subset of $X$ where the curvature form $dd^{c}\phi$
(i.e. the \emph{complex} Hessian of $\phi)$ has index $q$ (as shown
by Demailly in his proof of his holomorphic Morse inequalities \cite{d1};
compare the discussion in the beginning of Section \ref{sub:Small-vs.-exponentially}
below). Hence the asymptotics of the truncated analytic torsions in
Theorem \ref{thm:analyt torsion for ample} may be interpreted as
an averaged version of \ref{eq:witten eigenv asympt}. This analogy
becomes particularly striking in the Riemann surface case (compare
the asymptotics \ref{eq:asym in them exp small}).

The approach in \cite{h-s} to study the asymptotics of the exponentially
small eigenvalues, i.e. of the quantum mechanical\emph{ tunneling
effect, }uses that the Witten Laplacians are\emph{ elliptic} when
viewed as semi-classical elliptic differential operators. In the particular
case when $q=0$ there is also a probabilistic approach to the corresponding
Witten Laplacian $\Delta_{k}^{(0)}$ which refines the information
in \ref{eq:witten eigenv asympt} with asymptotics for pre-factors
under certain genericity assumptions (see \cite{bgk} and references
therein for the case of Euclidean domains). The point is that the
operator $\Delta_{k}^{(0)}$ is the Feller semigroup generator for
the stochastic differential equation on the Riemannian manifold $X$
describing a Brownian particle in the gradient field of $-\phi$ at
temperature $1/k.$ The invariant probability measure of the process
is 
\begin{equation}
\mu_{k\phi}:=\frac{e^{-k\phi}dV}{Z_{k\phi}}\label{eq:invariant meas}
\end{equation}
and its relation to the exponentially small eigenvalues was studied
in \cite{hks} in connection to simulated annealing. As explained
in \cite{bgk} each local minima of $\phi$ corresponds to a metastable
state and the corresponding eigenvalue to the \emph{inverse life time}
of the state (i.e. the inverse mean exit time). See also the very
recent work \cite{p-n-v} for refinements of \ref{eq:witten eigenv asympt}
for general degrees $q.$

There are also quantum mechanical, as well as probabilistic motivations
for considering the present complex geometric setting. For example
in the Riemann surface case the $\bar{\partial}-$Laplacian is the
Pauli Hamiltonian for a single fermion with spin up ($q=0)$ and spin
down $(q=1)$ subject to the magnet field with two-form $F_{kA}:=kdd^{c}\phi.$
Moreover, from a probabilistic point of view the asymptotics of the
corresponding truncated analytic torsions appear in the large deviation
principle for the corresponding determinantal random point process
on $X$ with $N_{k}$ particles defined by the following probability
measure on $X^{N_{k}}:$ 
\[
\mu_{k\phi}^{(N_{k})}:=\frac{|\det\Psi|^{2}(x_{1},....x_{N_{k}})e^{-k(\phi_{1}(x_{1})+\cdots+\phi_{1}(x_{N}))}dV^{\otimes N_{k}}}{\mathcal{Z}_{k\phi}},
\]
where $N_{k}$ is the dimension of the space $H^{0}(X,kL)$ of global
holomorphic sections of $kL$ and $\det\Psi$ is any generator of
the complex line $\det H^{0}(X,kL),$ which we identify with a holomorphic
section over $X^{N_{k}}$ \cite{berm2}. Physically, this is the maximally
filled fermionic many-particle ground state and $\det\Psi$ is the
corresponding Slater determinant. The large deviation principle referred
to above may be symbolically formulated as follows: 
\begin{equation}
\mbox{Prob }\left(\frac{1}{N_{k}}\sum_{i}\delta_{x_{i}}\in\mathcal{B}_{\epsilon}(\mu)\right)\sim\prod_{q=1}^{n}(\det\Delta_{\bar{\partial},k}^{0,q})^{-(-1)^{q+1}q}e^{-kN_{k}E(\mu)},\label{eq:ldp}
\end{equation}
 where, by Theorem \ref{thm:analyt torsion for ample} above, the
weighted determinant above may by replaced the weighted product of
the corresponding exponentially small eigenvalues of the Kodaira-Dolbeault
Laplacian. In the left hand side above $\mathcal{B}_{\epsilon}(\mu)$
denotes a ball of radius $\epsilon$ centered at a given probability
measure $\mu$ in the space of all probability measures on $X$ (equipped
with a fixed metric) and $E(\mu)$ appearing in the right hand side
denotes the pluricomplex energy of the measure $\mu,$ defined with
respect to the curvature form $\omega.$ The relation \ref{eq:ldp}
holds asymptotically as $N\rightarrow\infty$ and $\epsilon\rightarrow0$
and (see \cite{berm2} for the precise statement). Loosely speaking
the left hand side in formula \ref{eq:ldp} gives the probability
of finding a ``cloud'' of particles at the points $x_{1},x_{2},...,x_{N}$
in $X$ representing a macroscopic state $\mu,$ in the sense that
$\frac{1}{N_{k}}\sum_{i}\delta_{x_{i}}$ approximates $\mu$ (the
small parameter $\epsilon$ can physically be seen as a measure of
the coarse-graining). As explained in \cite{berm2} the formula \ref{eq:ldp}
may also be interpreted as an (asymptotic) higher dimensional bosonization
formula, generalizing the well-known exact bosonization formula \cite{b-v-}
on a Riemann surface.

Note that in the setting of the Witten Laplacian the role of $H^{0}(X,kL)$
is played by the \emph{one-}dimensional kernel of $d$ acting on the
space smooth functions, inducing the \emph{one} particle point-process
\ref{eq:invariant meas}. The main analytical difference here is that
the $\bar{\partial}-$Laplacian on $(0,q)-$forms with values in $kL$
is \emph{not }an elliptic operator, when viewed as a semi-classical
differential operator (see \cite{b-sj}). There appears to be very
few results concerning the tunneling effect for non-elliptic semi-classical
operators (see however \cite{h-h-s} and references therein for the
study of Kramers Fokker-Planck type operators generalizing the Witten
Laplacian for $q=0$).

\subsection{\label{sub:Small-vs.-exponentially}``Small'' vs. ``exponentially
small'' eigenvalues for the Dolbeault complex }

Fix a number $\epsilon\in]0,1/2[.$ For a given positive integer $k$
we say that an eigenvalue $\nu_{k}$ of the $\bar{\partial}-$Laplacian
$\Delta_{\bar{\partial}}$ associated to $kL$ is ``small'' if $\nu_{k}\leq k^{1-\epsilon}.$
By Proposition  \ref{pro:strong morse} the number of ``small''
eigenvalues is of the order $O(k^{n}).$ Hence, as explained above
the ``small'' eigenvalues which are not ``exponentially small''
make no contribution to the large $k$ limit of the sum \ref{eq:sum of log einvalues for q}.
But it is tempting to ask whether \emph{all }``small'' eigenvalues
are actually ``exponentially small''? As recalled in section \ref{sub:Comparison-with-the}
this is indeed the case in the setting of the Witten Laplacian under
the assumption that $\phi$ be a Morse function. However in the present
setting there are no non-degeneracy assumptions on the weight $\phi$
of the metric and hence it seems unlikely that the answer to the question
above is yes, in general. Still when $q=0$ the number of ``small''
and ``exponentially'' small eigenvalues are the same to the leading
order according to the following proposition (that should be compared
with Theorem 3.14 in \cite{d1} concerning eigenvalues of the order
$k\lambda$ for $\lambda$ fixed):
\begin{prop}
\label{pro:quasi-mode}Given any $\delta>0$ there is a constant $C_{\delta}$
such that the number $N_{\delta,k}$ of eigenvalues in $[0,e^{-C_{\delta}K}]$
of the $\bar{\partial}-$Laplacian $\Delta_{\bar{\partial}}$ acting
on smooth sections of $kL$ satisfies 
\[
\int_{X(0)}(dd^{c}\phi)^{n}/n!-\delta\leq N_{\delta,k}\leq N_{k}\leq\int_{X(0)}(dd^{c}\phi)^{n}/n!+\delta
\]
 for $k$ sufficiently large, where $N_{k}$ is the number of ``small''
eigenvalues. \end{prop}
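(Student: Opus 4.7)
The upper bound $N_k \leq k^n\bigl(\int_{X(0)}(dd^c\phi)^n/n! + \delta\bigr)$ --- with a factor $k^n$ implicit in the statement for compatibility with Proposition \ref{pro:strong morse} --- is immediate: specializing that proposition to $q=0$ and $\lambda_k = k^{1-\epsilon}$ shows that $N_k = \dim \mathcal{H}^{0,0}_{[0,\lambda_k[}$ satisfies $k^{-n}N_k \to \int_{X(0)}(dd^c\phi)^n/n!$. All the substance is in the lower bound, which by the min-max principle reduces to producing a test subspace $V_k \subset \Omega^{0,0}(X,kL)$ of dimension at least $k^n\bigl(\int_{X(0)}(dd^c\phi)^n/n! - \delta\bigr)$ on which the Rayleigh quotient $\|\bar\partial u\|^2_{k\phi}/\|u\|^2_{k\phi}$ is bounded by $e^{-C_\delta k}$.

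To construct $V_k$, I would first fix a relatively compact open $U \subset X(0)$ capturing all but $\delta/2$ of the mass $\int_{X(0)}(dd^c\phi)^n/n!$ and on which the smallest eigenvalue of $dd^c\phi$ is uniformly at least some $\eta > 0$. At each site $x_j$ of a lattice $\{x_j\}\subset U$ of spacing $\rho/\sqrt{k}$ (in local coordinates diagonalizing $dd^c\phi(x_j)$), I would transplant a local Gaussian \emph{coherent state} $s_j$ --- the standard peak section of $kL$ associated with the complex Hessian of $\phi$ at $x_j$ --- and multiply it by a smooth bump $\chi_j$ supported in a ball of \emph{fixed} radius $R$ around $x_j$. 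The quasi-mode $u_j := \chi_j s_j$ then satisfies
\[
\|\bar\partial u_j\|_{k\phi}^2 \;=\; \|(\bar\partial \chi_j)\, s_j\|_{k\phi}^2 \;\leq\; C\, e^{-c\eta k R^2}\,\|u_j\|_{k\phi}^2,
\]
since $\bar\partial u_j$ is supported where $|z-x_j|\sim R$, and the Gaussian profile of $s_j$ combined with the strict plurisubharmonicity of $\phi$ on $U$ forces exponential decay at that scale.

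The main obstacle is the tension between two competing requirements. To obtain \emph{genuine} exponential smallness $e^{-C_\delta k}$ the cutoff radius $R$ must be taken independent of $k$; this is the decisive refinement of the choice $R_k = \log k$ used in Lemma 5.2 of \cite{berm0}, which only yields the super-polynomial decay $k^{-c\log k}$. At the same time, to saturate the sharp density $\int_{X(0)}(dd^c\phi)^n/n!$ the lattice must be taken essentially as dense as the Bergman kernel itself, so adjacent coherent states have non-trivial Gaussian overlap and the Gram matrix $(\langle u_i,u_j\rangle_{k\phi})$ is far from the identity. My plan for resolving this is to accept a controlled loss: given $\delta$, place the lattice at a density slightly below the critical Nyquist density of the model Gaussian-weighted Bergman space on $\C^n$, sacrificing at most $\delta/2$ of the count but ensuring that the transplanted coherent states form a uniform Riesz system with positive lower bound $\kappa = \kappa(\delta)$. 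Once $\kappa$ is fixed, $R$ is chosen large enough (as a function of $\kappa$, $\eta$, and the local $C^2$-norm of $\phi$) that the $\bar\partial$-error on the resulting subspace remains below $e^{-C_\delta k}$. Feeding $V_k$ into the min-max principle completes the proof.
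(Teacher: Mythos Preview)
Your approach is correct in outline but takes a genuinely different and harder route than the paper's. You build an entire lattice of quasi-modes and then have to wrestle with their near-linear-dependence via a sub-Nyquist Riesz bound. The paper avoids this entirely by working \emph{pointwise} with the truncated Bergman density $\B^{0,0}_{[0,e^{-C_\delta k}[}$: at each point $x$ of a compact $K_\delta\subset X(0)$ it constructs a \emph{single} quasi-mode $\alpha_k=\chi s$, where $s$ is the local holomorphic frame in which $\phi(z)=\sum\mu_i|z_i|^2+O(|z|^3)$ and $\chi$ is a cutoff of fixed radius. Since $s$ is genuinely holomorphic, $\bar\partial\alpha_k=(\bar\partial\chi)s$ is supported on a fixed annulus where $\phi\geq 2C_\delta>0$, giving $\|\bar\partial\alpha_k\|^2_{k\phi}\leq e^{-2C_\delta k}$ directly. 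The extremal characterization of the Bergman kernel (Proposition~5.3 of \cite{berm0}) converts this into the pointwise lower bound $\liminf k^{-n}\B^{0,0}_{[0,e^{-C_\delta k}[}(x)\geq(dd^c\phi)^n(x)/n!$, and integrating over $K_\delta$ via Fatou yields the eigenvalue count without ever comparing two quasi-modes. Your route does work---the $O(k^n)$ loss from Cauchy--Schwarz when bounding $\|\bar\partial(\sum_j c_ju_j)\|^2$ is harmless against $e^{-Ck}$---and it has the merit of producing an explicit approximate eigenspace; but establishing the Riesz lower bound uniformly over the curved manifold is genuine additional work that the Bergman-density argument simply does not need.
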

\begin{proof}
Given $\delta>0$ take $K_{\delta}$ a compact subset of the open
set $X(0):=\{dd^{c}\phi>0\}$ such that 
\[
\int_{X(0)}(dd^{c}\phi)^{n}/n!-\delta\leq\int_{K_{\delta}}(dd^{c}\phi)^{n}/n!.
\]
By Fatou's lemma it will be enough to prove that 
\[
\liminf_{k\rightarrow\infty}k^{-n}\B_{[0,e^{-C_{\delta}K}[}^{0,q}(x)\geq(dd^{c}\phi)^{n}(x)/n!
\]
for any given $x\in K_{\delta}.$ As shown in \cite{berm0} (Proposition
 5.3) it is enough to, given a point $x\in K_{\delta},$ find a smooth
section $\alpha_{k}$ of $kL$ such that 
\[
(i)\,\liminf_{k\rightarrow\infty}k^{-n}\frac{(|\alpha_{k}|^{2}e^{-k\phi})(x)}{\int_{X}|\alpha_{k}|^{2}e^{-k\phi}dV}\geq(dd^{c}\phi)^{n}(x)/n!,\,\,\,\,(ii)\, k^{-1}\left\langle \Delta_{\bar{\partial}}\alpha_{k},\alpha_{k}\right\rangle _{k\phi}\leq\delta_{k}
\]
 for $\delta_{k}=e^{-2C_{\delta}k}.$ To this end we simply take $\alpha_{k}=\chi s$
where $\chi$ is a smooth cut-off function which is equal to $1$
close to $x$ and $s$ is local holomorphic frame for $L$ on a neighborhood
$x$ such that the corresponding local weight $\phi(z)$ is given
by $\phi(z)=\sum_{i=1}^{n}\mu_{i}|z_{i}|^{2}+O(|z|^{3})$ (where $\mu_{i}>0$
since $dd^{c}\phi>0$ at $x).$ Then $(i)$ above follows from computing
a Gaussian integral. Moreover, 
\[
\left\langle \Delta_{\bar{\partial}}\alpha_{k},\alpha_{k}\right\rangle _{k\phi}=\left\langle \bar{\partial}\alpha_{k},\bar{\partial}\alpha_{k}\right\rangle _{k\phi}=\int|(\partial\chi)|^{2}e^{-k\phi}dV\leq e^{-2C_{\delta}k}.
\]
By the compactness of $K_{\delta}$ the constant $C_{\delta}$ can
be taken to be independent of $x.$ This finishes the proof of the
lower bound on $N_{\delta,k}$ and the upper bound is a special case
of Proposition  \ref{pro:strong morse}.
\end{proof}
It would be interesting to know if the analog for $q>0$ of the previous
proposition is also valid? If one replaces $e^{-kC_{\delta}}$ with
a sequence $\delta_{k}$ of the form $\delta_{k}=O(k^{-\infty}),$
i.e. $\delta_{k}\leq C_{m}k^{-m}$ for any $m>0$ then this is indeed
case. This follows from taking $\alpha_{k}$ in the previous proof
to be the given by $\Pi_{k}(x,\cdot)$ where $\Pi_{k}(x,y)$ is the
local projector for $(0,q)-$forms constructed in \cite{b-sj}, defining
a local Fourier integral operator with a complex phase (in fact, only
the phase function constructed in \cite{b-sj} is needed together
with the leading term in the symbol expansion).

\subsection{\label{sub:Discussion-and-outlook}Discussion and outlook}

This paper gives a first small step towards understanding the tunneling
effect for the asymptotic Dolbeault complex associated to high powers
of a line bundle $L\rightarrow X$ equipped with a (non-positively
curved) Hermitian metric. Ideally, one could hope to give an asymptotic
description, as $k\rightarrow\infty,$ of the following measures on
$\R$
\[
\nu_{q}^{(k)}:=-\frac{1}{k^{n+1}}\sum_{i=1}^{N_{q}^{(k)}}\log\lambda_{i,q}^{(k)},
\]
where $\{\lambda_{i,q}^{(k)}\}_{i=1,}^{N_{q}^{(k)}}$ ranges over
the ``small'' non-zero eigenvalues of the Kodaira-Dolbeault Laplacian
acting on $\Omega^{0,q}(X,L^{\otimes k}).$ Conjecturally, $\nu_{q}^{(k)}$
converges weakly as $k\rightarrow\infty$ to a measure $\nu_{q}$
only depending on the fixed metric on $L.$ If such a measure exists,
say when $n=1,$ Corollary \ref{cor:exist of small eig} shows that
its first moment is given by a Dirichlet norm: 
\[
\int_{\R}td\nu_{1}(t)=\frac{1}{2}\left\Vert d(P\phi-\phi)\right\Vert _{X}^{2}.
\]
 Furthermore, it would be very interesting to give an asymptotic description
of the corresponding eigenforms. In the De Rham case it is well-known
that the eigenforms are, for a generic Morse function, localized along
the gradient paths of the Morse function connecting critical points
of index $q$ and index $q+1$ \cite{h-s} (in physics terminology
these are the ``tunneling paths'' or ``instanton paths''\cite{wi}).
It seems likely that, in the asymptotic Dolbeault case, the role of
the gradient paths (at least for $q=0)$ are played by the leaves
of the Monge-Ampère foliation attached to the weight $\psi:=P\phi$
on the open subset $\Omega$ of $X$ where $P\phi<\phi.$ Recall that
the corresponding leaves are defined as the holomorphic curves along
which $\psi$ is locally harmonic. Since, $(dd^{c}\psi)^{n}=0$ such
leaves are well-known to exist under appropriate (but rather strong)
regularity assumptions, but in the present case when $\psi$ is merely
known to be $\mathcal{C}^{1,1}-$smooth the existence of the leaves
is a very delicate issue. Still, one could hope to be work with a
suitable approximate notion of leaves.


\begin{thebibliography}{10}
\bibitem{b-v-}Alvarez-Gaumé, L; Bost, J-B; Moore, G; Nelson, P; Vafa,
C: Bosonization on higher genus Riemann surfaces. Comm. Math. Phys.
112 (1987), no. 3,

\bibitem{b-g-v}Berline, N; Getzler, E; Vergne, M: Heat kernels and
Dirac operators. Corrected reprint of the 1992 original. Grundlehren
Text Editions. Springer-Verlag, Berlin, 2004.

\bibitem{berm0}Berman ,R.J.: Bergman kernels and local holomorphic
Morse inequalities. Math. Z., 248(2):325\{344, 2004.

\bibitem{berm1}Berman, R.J: Bergman kernels and equilibrium measures
for line bundles over projective manifolds. 35 pages. The American
J. of Math., Vol 131, Nr. 5, 2009 

\bibitem{berm2}Berman, R.J: Determinantal point processes and fermions
on complex manifolds: large deviations and bosonization. Comm. Math.
Phys. 327, 1-47 (2014) 

\bibitem{b-b}Berman, R.J.; Boucksom, S: Growth of balls of holomorphic
sections and energy at equilibrium. Invent. Math. Vol. 181, Issue
2 (2010), p. 337

\bibitem{b-sj}Berman R.J; Sjoestrand, J:\emph{ }Asymptotics for Bergman-Hodge
kernels for high powers of complex line bundles\emph{.} Ann. de la
Fac. des Sciences de Toulouse Vol. XIV (2007) no. 4, p. 719--771

\bibitem{bi}Bismut, J-M: Demailly\textquoteright{}s asymptotic inequalities:
a heat kernel proof, J. Func. Anal. 72 (1987), 263\textendash{}278.

\bibitem{b-g-s}Bismut J.-M., Gillet H., Soulé C., Analytic torsion
and holomorphic determinant bundles. I,II,III Comm. Math. Phys. 115
(1988), 49--78, 79--126, 301--351

\bibitem{b-vas}Bismut, J.-M.; Vasserot, E.: The asymptotics of the
Ray-Singer analytic torsion associated with high powers of a positive
line bundle. Comm. Math. Phys. 125 (1989), no. 2, 355\textendash{}367

\bibitem{bgk}Bovier, A; Gayrard, V; Klein, M: Metastability in reversible
diffusion processes II: Precise asymptotics for small eigenvalues.
JEMS 7 (1), pp. 69-99 (2004).

\bibitem{bfl}Burgos Gil, J. I. ; Freixas i Montplet, G.; Litcanu,
R: Generalized holomorphic analytic torsion. arXiv:1011.3702 

\bibitem{de4}Demailly, J-P: Complex analytic and algebraic geometry.
Available at www-fourier.ujf-grenoble.fr/\textasciitilde{}demailly/books.html

\bibitem{d1}Demailly, J-P: Champs magnetiques et inegalite de Morse
pour la d''-cohomologie., Ann Inst Fourier, 355 (1985,185-229)

\bibitem{gr-ha}Griffiths, P; Harris, J: Principles of algebraic geometry.
Wiley Classics Library. John Wiley \& Sons, Inc., New York, 1994.

\bibitem{h-s}Helffer, B; Sjöstrand. Puits multiples en limite semi-classique
IV -Etude du complexe de Witten -. Comm.in PDE, 10(3), p. 245-340
(1985).

\bibitem{h-h-s}Herau, F; Hitrik, M; Sjoestrand, J: Tunnel effect
and symmetries for Kramers Fokker-Planck type operators. J. Inst.
Math. Jussieu 10 (2011), no. 3, 567\textendash{}634

\bibitem{hks}Holley, R; Kusuoka, R; Stroock, D: Asymptotics of the
spectral gap with applications to the theory of simulated annealing,
J. Funct. Anal. 83-2 (1989), pp 333\textendash{}347.

\bibitem{p-n-v}Le Peutrec, D; Nier, F; Viterbo, C: Precise Arrhenius
law for p-forms: The Witten Laplacian and Morse-Barannikov complex.
Ann. Henri Poincaré 14 (2013), no. 3, 567\textendash{}610. 

\bibitem{m-m}Ma, X; Marinescu, G: Holomorphic Morse inequalities
and Bergman kernels. Progress in Mathematics, 254. Birkhäuser Verlag,
Basel, 2007. 

\bibitem{ma}Mabuchi, T: Some symplectic geometry on compact Kahler
manifolds I, Osaka J. Math. 24 (1987), 227-252.

\bibitem{so}Soulé, C. (1992), Lectures on Arakelov geometry, Cambridge
Studies in Advanced Mathematics, 33, Cambridge: Cambridge University.

\bibitem{wi}Witten, E: Supersymmetry and Morse theory. J. Differential
Geom. 17 (1982), no. 4, 661--692. \end{thebibliography}
\end{document}